\newtheorem{thm}{Theorem}[section]
\newtheorem{lem}[thm]{Lemma}
\newtheorem{prop}[thm]{Proposition}
\newtheorem{cor}[thm]{Corollary}
\newenvironment{remark}[1][Remark.]{\begin{trivlist}
\item[\hskip \labelsep {\bfseries #1}]}{\end{trivlist}}
\theoremstyle{definition}
\newtheorem{defn}[thm]{Definition}
\theoremstyle{definition}
\newtheorem{exmp}[thm]{Example}
\begin{document}

\nocite{*}

\title{Injective Presentations of Induced Modules over Cluster-Tilted Algebras}

\author{Ralf Schiffler}   
\address{Department of Mathematics, University of Connecticut, 
Storrs, CT 06269-3009, USA}
\email{schiffler@math.uconn.edu}
\author{Khrystyna Serhiyenko}\thanks{The authors were supported by the NSF CAREER grant DMS-1254567 and by the University of Connecticut. The second author was also supported by the NSF Postdoctoral fellowship MSPRF-1502881.}
\address{Department of Mathematics, University of California, Berkeley, 
CA 94720-3840, USA}
\email{khrystyna.serhiyenko@berkeley.edu}

\maketitle

\begin{abstract} 
Every cluster-tilted algebra $B$ is the relation extension $C\ltimes \textup{Ext}^2_C(DC,C)$ of a tilted algebra $C$. A $B$-module is called induced if it is of the form $M\otimes_C B$ for some $C$-module $M$. We study the relation between the injective presentations of a $C$-module and the injective presentations of the induced $B$-module. Our main result is an explicit construction of the modules and morphisms in an injective presentation of any induced $B$-module.  In the case where the $C$-module, and hence the $B$-module, is projective, our construction yields an injective resolution. In particular, it gives a module theoretic proof of the well-known 1-Gorenstein property of cluster-tilted algebras.

\end{abstract}

\section{Introduction}
Cluster-tilted algebras are finite dimensional associative algebras which were
introduced in \cite{BMR} and, independently, in \cite{CCS} for the type $\mathbb{A}$. 

One motivation for introducing these algebras came from Fomin and Zelevinsky's cluster algebras \cite{FZ}. To every cluster in an acyclic cluster algebra one can associate a cluster-tilted algebra, and the indecomposable rigid modules over the cluster-tilted algebra correspond bijectively to the cluster variables outside the chosen cluster. Generalizations of cluster-tilted algebras, the Jacobian algebras of quivers with potentials, were introduced in \cite{DWZ}, extending this correspondence to the non-acyclic types. 
Many people have studied cluster-tilted algebras in this context, see for example \cite{BBT,BMR, BMR2, BMR3, CCS2, CC, CK, KR}.

The second motivation came from classical tilting theory. Tilted algebras are the endomorphism algebras of tilting modules over hereditary algebras, whereas cluster-tilted algebras are the endomorphism algebras of cluster-tilting objects over  cluster categories of  hereditary algebras. This similarity in the two definitions lead to the following precise relation between tilted and cluster-tilted algebras, which was established in \cite{ABS}.

There is a surjective map
\[\xymatrix{ \{\textup{tilted algebras}\} \ar@{->>}[r] &\{\textup{cluster-tilted algebras}\} ,& C\ar@{|->}[r]&B=C\ltimes E,
}\]
where $E$ denotes the $C$-$C$-bimodule $E=\text{Ext}^2_C(DC,C)$ and $C\ltimes E$ is the trivial extension.

This result allows one to define cluster-tilted algebras without using the cluster category.
It is natural to ask how the module categories of $C$ and $B$ are related, and several results in this direction have been obtained, see for example \cite{ABS2,ABS3,ABS4, BFPPT,BOW,DS}. 

The Hochschild cohomology of the algebras $C$ and $B$ has been compared in \cite{AR,ARS,ABIS,L,AGST}.

\bigskip
In \cite{SS}, we initiated a new approach to study the relation between the module categories of a tilted algebra $C$ and its cluster-tilted algebra $B=C\ltimes E$, namely \emph{induction} and \emph{coinduction}.

The induction functor
$-\otimes_C B$ and the coinduction functor $\text{Hom}_C(B,-)$ from $\textup{mod}\,C$ to $\textup{mod}\,B$ are defined whenever $C$ is a subring of $B$ which has the same identity. If we are dealing with algebras over a field $k$, we can, and usually do, write the coinduction functor as $D(B\otimes_CD-)$, where $D=\text{Hom}(-,k)$ is the standard duality.

Induction and coinduction are important tools in classical Representation Theory of Finite Groups. In this case, $B$ would be the group algebra of a finite group $G$ and $C$ the group algebra of a subgroup of $G$ (over a field whose characteristic is not dividing the group orders). In this situation, the algebras are semi-simple, induction and coinduction are the same functor, and this functor is exact.

For arbitrary rings, and even for finite dimensional algebras, the situation is not that simple. In general, induction and coinduction are not the same functor and, since the $C$-module $B$ is not projective (and not flat), induction and coinduction are not exact functors.

However, the connection between tilted algebras and cluster-tilted algebras is close enough so that induction and coinduction are interesting tools for the study of the relation between the module categories.

\smallskip

 In this paper, we use induction and coinduction to construct explicit injective presentations of induced modules over cluster-tilted algebras.
Since the induction functor sends projective $C$-modules $P_C$ to projective $B$-modules $P_B=P_C\otimes_CB$, and the coinduction functor sends injective $C$-modules $I_C$ to injective $B$-modules $I_B=D(B\otimes_CDI_C)$, we are able to construct injective presentations in $\textup{mod}\,B$ from corresponding injective presentations in $\textup{mod}\,C$. 

Our main result is the following.  Here $\nu$ denotes the Nakayama functor and $\Omega$ the first syzygy.

\begin{thm}\label{main thm}
Let $C$ be a tilted algebra, $B$ the corresponding cluster-tilted algebra, $M$ an \emph{indecomposable} $C$-module. Let 
$$\xymatrix {0\ar[r]& M \ar[r]&I^0_C \ar[r]&I^1_C&\text{and}& 0\ar[r]& M\otimes_C E \ar[r]&\bar{I}^0_C \ar[r]&\bar{I}^1_C}$$
be minimal injective presentations in {\upshape{mod}}$\, C$, 
$$\xymatrix{0\ar[r] & %D(E\otimes DM)
{\tau\Omega M}\ar[r] & \hat{I}_C}$$
an injective envelope in \upshape{mod}$\,C$, and let $\tilde{I}_C$ be the injective $C$-module $\tilde{I}_C=\nu\nu^{-1}\Omega^{-1} M$.  Then 
$$\xymatrix {0\ar[r]&M\otimes_C B \ar[r]&I^0_B\oplus\bar{I}^0_B \ar[r]&\tilde{I}_B\oplus\bar{I}^1_B\oplus \hat{I}_B}$$
is an injective presentation of $M\otimes_C B$ in {\upshape{mod}}$\,B$. 
\end{thm}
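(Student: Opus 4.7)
The strategy is to build the asserted injective presentation of $M\otimes_C B$ by lifting the two given injective presentations from $\textup{mod}\,C$ to $\textup{mod}\,B$ via the coinduction functor $D(B\otimes_C D{-})$, which preserves injectivity, and then gluing them along the short exact sequence of $B$-modules
$$0\longrightarrow M\otimes_C E\longrightarrow M\otimes_C B\longrightarrow M\longrightarrow 0.$$
This sequence exists because $B=C\oplus E$ as a $C$-$C$-bimodule; the quotient $M$ is viewed as a $B$-module via $B\twoheadrightarrow B/E=C$. Roughly, the injective presentation of $M$ should contribute the $I^i_B$ summand, the one of $M\otimes_C E$ should contribute the $\bar{I}^i_B$ summands, and the failure of coinduction to be right exact on the presentation of $M$ should produce the remaining $\tilde{I}_B$ and $\hat{I}_B$ terms.

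To construct the first map $M\otimes_C B\to I^0_B\oplus \bar{I}^0_B$ I would use the induction-restriction adjunction $\textup{Hom}_B(M\otimes_C B,\,Y)\cong\textup{Hom}_C(M,\,Y|_C)$ together with the decomposition $I_B|_C\cong I_C\oplus\textup{Hom}_C(E,I_C)$ as $C$-modules (which follows from $B=C\oplus E$ and the definition of coinduction via tensor-hom). Under this adjunction, a $B$-map $M\otimes_C B\to I_B$ corresponds to a pair of $C$-maps $(M\to I_C,\ M\otimes_C E\to I_C)$. Feeding in $(M\hookrightarrow I^0_C,\ 0)$ on the first summand and $(0,\ M\otimes_C E\hookrightarrow\bar{I}^0_C)$ on the second produces the desired map. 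Injectivity then follows by restricting to $\textup{mod}\,C$, where $M\otimes_C B$ splits as $M\oplus(M\otimes_C E)$, and chasing the two injective $C$-maps along the short exact sequence above.

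The main obstacle is the construction of the second arrow and the verification of exactness at the middle term, which is where $\tilde{I}_B$ and $\hat{I}_B$ enter. These arise because coinduction is only left exact: applying $D(B\otimes_C D{-})$ to the presentation $0\to M\to I^0_C\to I^1_C$ produces correction terms controlled by $\textup{Ext}^1_C(B,-)\cong\textup{Ext}^1_C(E,-)$ (using that $C$ is projective over itself). Using Auslander-Reiten duality together with the specific bimodule structure of $E=\textup{Ext}^2_C(DC,C)$ for a tilted algebra, one expects to identify these corrections with the injective envelope of $\tau\Omega M$ (yielding $\hat{I}_C$) and with a term built from the Nakayama functor applied to the cosyzygy $\Omega^{-1}M$ (yielding $\tilde{I}_C=\nu\nu^{-1}\Omega^{-1}M$). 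The indecomposability hypothesis on $M$ is used precisely here, to guarantee that these correction terms are single indecomposable injectives determined by explicit formulas rather than more complicated direct sums. The final assembly will likely invoke explicit computations from the authors' earlier paper \cite{SS} relating induction and coinduction to the Auslander-Reiten theory of cluster-tilted algebras.
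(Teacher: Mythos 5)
Your plan has a structural gap: it does not recognize that the two correction terms $\tilde{I}_B$ (respectively its difference from $I^1_B$) and $\hat{I}_B$ arise from \emph{mutually exclusive} cases, and your proposed explanation for the indecomposability hypothesis is incorrect.

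The role of indecomposability in the paper is not, as you suggest, to force the correction terms to be single indecomposable injectives (and indeed $\hat{I}_C$ need not be indecomposable). It is used to invoke the standard dichotomy for tilted algebras: every indecomposable module has $\textup{pd}_C M\leq 1$ or $\textup{id}_C M\leq 1$. This splits the argument into two disjoint regimes. If $\textup{pd}_C M\leq 1$ and $\textup{id}_C M\geq 1$, one has $\tau\Omega M=D(E\otimes DM)=0$, so $\hat{I}_B=0$, and the presentation $0\to M\otimes B\to I^0_B\oplus\bar{I}^0_B\to \tilde{I}_B\oplus\bar{I}^1_B$ is established by an intricate double diagram chase (Theorem~\ref{5.7}, built on Lemmas~\ref{5.1}, \ref{5.3}, \ref{5.4} and Proposition~\ref{5.5}); note $\tilde{I}_C=\nu\nu^{-1}\Omega^{-1}M$ can differ from $I^1_C$ precisely when $\textup{id}_C M=2$. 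Conversely, if $\textup{pd}_C M=2$ (or $M$ is injective) then $\textup{id}_C M\leq 1$, hence $M\otimes E=0$, all $\bar{I}^i_B$ vanish, $\tilde{I}_B=I^1_B$, and the only new ingredient is $\hat{I}_B$, obtained by computing $\Omega^{-1}_B M$ via coinduction of the injective resolution of $M$ and an application of the Horseshoe Lemma to the short exact sequence $0\to D(E\otimes DM)\to \Omega^{-1}_B M\to \Omega^{-1}_B(D(B\otimes DM))\to 0$.

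In particular, your strategy of applying a Horseshoe-type gluing to $0\to M\otimes E\to M\otimes B\to M\to 0$ cannot produce all the terms. When $\hat{I}_B\neq 0$, that sequence degenerates to $0\to 0\to M\to M\to 0$, so the Horseshoe Lemma gives nothing; one instead needs the $\textup{mod}\,B$-cosyzygy computation above. When $M\otimes E\neq 0$ so that the Horseshoe picture is nontrivial, the needed second-term replacement of $I^1_B$ by $\tilde{I}_B$ is not a formal consequence of right-nonexactness of coinduction (which controls the cokernel, not the choice of injective); it requires the specific comparison with $\nu\nu^{-1}\Omega^{-1}M$ established in Lemma~\ref{5.1} by applying $\nu^{-1}$ and then $\nu$ to the minimal injective resolution of $M$. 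Without identifying the case split and these two separate mechanisms, your plan as written would not lead to a proof of the theorem.
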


In the special case where $M$ is a projective $C$-module, we can show that the presentation is actually a resolution, and we obtain the following result.

\begin{cor}\label{cor intro}  With the notation above,
 if $M= P_C$ is a projective $C$-module, then  
 $$\xymatrix {0\ar[r]& P_C\otimes_C B \ar[r]&I^0_B\oplus\bar{I}^0_B \ar[r]&\tilde{I}_B\oplus\bar{I}^1_B\ar[r]&0}$$
 is an injective resolution in $\textup{mod}\,B$.
\end{cor}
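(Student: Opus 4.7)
The plan is to specialize Theorem~\ref{main thm} to $M=P_C$ and verify two reductions: that the $\hat I_B$ summand drops out, and that the remaining map is surjective. The first reduction is immediate --- since $P_C$ is projective, $\Omega P_C=0$, hence $\tau\Omega P_C=0$, and its injective envelope $\hat I_C$ is zero. Coinduction is an additive functor, so $\hat I_B = D(B\otimes_C D\hat I_C) = 0$, and the rightmost term of the presentation in Theorem~\ref{main thm} reduces to $\tilde I_B\oplus\bar I^1_B$.

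For the second reduction, I would read off the map $\varphi:I^0_B\oplus\bar I^0_B\to\tilde I_B\oplus\bar I^1_B$ from the explicit block description produced in the proof of Theorem~\ref{main thm}. The entries of $\varphi$ are built from the coinductions of the $C$-maps $I^0_C\to I^1_C$ and $\bar I^0_C\to\bar I^1_C$ together with connecting morphisms that involve the syzygy $\Omega M$. Under the assumption $\Omega P_C=0$, the connecting entries should vanish, reducing surjectivity of $\varphi$ to checking surjectivity of the two diagonal blocks $I^0_B\to\tilde I_B$ and $\bar I^0_B\to\bar I^1_B$ separately.

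The principal obstacle, as I see it, is the first of these two blocks: the underlying $C$-map $I^0_C\to I^1_C$ need not be surjective, but we are targeting only the maximal injective summand $\tilde I_C=\nu\nu^{-1}\Omega^{-1}P_C$ of $I^1_C$. The strategy is to use the explicit description of $\tilde I_C$ via the Nakayama functor to show first that the $C$-map $I^0_C\to\tilde I_C$ is surjective, and second that this surjection survives coinduction --- equivalently, that the relevant $\text{Ext}^1_C(B,-)$ group, obtained from the kernel of $I^0_C\to\tilde I_C$, vanishes. The analogous surjectivity of $\bar I^0_B\to\bar I^1_B$ should follow from the specific structure of $E=\text{Ext}^2_C(DC,C)$ and from the behavior of coinduction on modules of the form $M\otimes_C E$. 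Once both blocks are shown to be surjective, the presentation becomes an injective resolution of length at most one; and since every indecomposable projective $B$-module arises as $P_C\otimes_C B$ for some indecomposable projective $P_C$, the 1-Gorenstein property of $B$ is an immediate consequence.
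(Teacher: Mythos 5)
Your first reduction is correct: since $P_C$ is projective, $\Omega P_C=0$, hence $\tau\Omega P_C=0$ and $\hat I_B=0$, so Theorem~\ref{main thm} reduces to the claimed shape. The rest of the argument, however, contains genuine errors. First, the off-diagonal entry of the presentation map in the proof of Theorem~\ref{main thm} (the map $-l_1\gamma v_1:\bar I^0_B\to\tilde I_B$ from (\ref{eq 7.5})) has nothing to do with the syzygy $\Omega M$: it is built out of the structure of $M\otimes E$ and the injective $\tilde I_C$, and it does \emph{not} vanish just because $\Omega P_C=0$. (The map is upper triangular in block form, so surjectivity does reduce to the two diagonal blocks, but that is for a reason you did not give.) Second, the claim that $\tilde I_C=\nu\nu^{-1}\Omega^{-1}P_C$ is ``the maximal injective summand of $I^1_C$'' is false; as Lemma~\ref{5.1} shows, $\tilde I_C$ fits in an exact sequence $0\to\tau\Omega\tau^{-1}\Omega^{-1}M\to\tilde I_C\to\Omega^{-1}M\to 0$, and the natural map $\tilde I_C\to I^1_C$ is neither injective nor surjective in general.

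What the paper actually does is simpler and you have missed the key ingredient. Theorem~\ref{5.7} (which applies to any $M$ with $\textup{pd}_C M\le 1$, in particular to $M=P_C$) does more than produce a presentation: it identifies the cokernel of the map $I^0_B\oplus\bar I^0_B\to\tilde I_B\oplus\bar I^1_B$ as exactly $\textup{Ext}^1_C(E,M)$. For $M=P_C$ this group vanishes by Corollary~\ref{4.7}, which says $E\oplus C$ is rigid, so $\textup{Ext}^1_C(E,C)=0$ and a fortiori $\textup{Ext}^1_C(E,P_C)=0$. That one vanishing statement is the whole proof; the block-by-block surjectivity analysis you sketch (which would in any case require verifying the Ext-vanishings $\textup{Ext}^1_C(DC,P_C)=0$ and $\textup{Ext}^1_C(E,P_C\otimes E)=0$ to make each diagonal block surjective under coinduction) is a much longer route, and the specific claims you use to justify it are incorrect.
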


%\begin{thm}\label{thm 2}
%Let $C$ be a tilted algebra, $B$ the corresponding cluster-tilted algebra, $P_C$ a projective $C$-module and $P_B=P_C\otimes_C B$ the corresponding projective $B$-module.  Let
% $$\xymatrix {0\ar[r]&P_C\ar[r]&I^0_C \ar[r]&I^1_C&\text{and}& 0\ar[r]&P_C\otimes_C E \ar[r]&\bar{I}^0_C \ar[r]&\bar{I}^1_C}$$
%be minimal injective presentations in {\upshape{mod}}$\, C$, and let $\tilde{I}_C$ be the injective $C$-module $\tilde{I}_C=\nu\nu^{-1}\Omega^{-1}P_C$.  Then 
%$$\xymatrix {0\ar[r]&P_B\ar[r]&I^0_B\oplus\bar{I}^0_B \ar[r]&\tilde{I}_B\oplus\bar{I}^1_B\ar[r]&0}$$
%is an injective resolution of $P_B$ in {\upshape{mod}}$\,B$.  
%\end{thm}
%
A dual version of each of the two results also holds, and in the case where the projective dimension of $M$ is at most 1, we also compute the cokernel of the last map in the presentation.

As an immediate consequence, we obtain a new proof, which does not use cluster categories, of a result by Keller and Reiten \cite{KR}.
\begin{cor}\label{corKR}
 Cluster-tilted algebras are 1-Gorenstein.
\end{cor}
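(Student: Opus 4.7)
The plan is to deduce the corollary directly from Corollary \ref{cor intro} together with its dual version. Recall that a finite dimensional algebra $B$ is $1$-Gorenstein if every indecomposable projective $B$-module has injective dimension at most $1$ and every indecomposable injective $B$-module has projective dimension at most $1$. So it suffices to verify these two dimension bounds for all indecomposables.

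First I would observe that every indecomposable projective $B$-module arises as an induced module. Since $C\subset B$ is a subring containing the identity, the primitive idempotents of $C$ and $B$ coincide, and for each such idempotent $e$ we have the standard isomorphism $eC\otimes_C B\cong eB$. Thus if $P_B$ is an indecomposable projective $B$-module, there is an indecomposable projective $C$-module $P_C$ with $P_B\cong P_C\otimes_C B$. Applying Corollary \ref{cor intro} to $P_C$ produces an injective resolution of $P_B$ of length at most one, so $\textup{id}_B\, P_B\le 1$.

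Next I would apply the dual statement (referenced in the paragraph after Corollary \ref{cor intro}) to indecomposable injective $B$-modules. The coinduction functor $\textup{Hom}_C(B,-)$ sends indecomposable injective $C$-modules to indecomposable injective $B$-modules, and by counting (there is one indecomposable injective per primitive idempotent in each of $C$ and $B$) every indecomposable injective $B$-module is of this coinduced form. The dual of Corollary \ref{cor intro} then yields a projective resolution of length at most one for every indecomposable injective $B$-module, giving $\textup{pd}_B\, I_B\le 1$. Combining the two bounds gives the $1$-Gorenstein property.

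The main obstacle, or rather the only nontrivial input, is the availability of the dual version of Corollary \ref{cor intro}; once this is granted, the argument is essentially a two-line verification that induction (resp.\ coinduction) exhausts the indecomposable projectives (resp.\ injectives) of $B$. No use of the cluster category is required, which is the point emphasized in the statement.
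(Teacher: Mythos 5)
Your proof is correct and follows essentially the same route as the paper: use Proposition~\ref{3.4} to identify each indecomposable projective $B$-module with $P_C\otimes_C B$ for an indecomposable projective $C$-module $P_C$, apply Corollary~\ref{cor intro} to bound $\textup{id}_B P_B\le 1$, and invoke the dual statement for coinduced injectives to bound $\textup{pd}_B I_B\le 1$. The only stylistic difference is that you spell out the idempotent correspondence, which the paper leaves implicit in the citation of Proposition~\ref{3.4}.
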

The proof of the theorems uses both induction and coinduction, and it relies greatly on the particular structure of the bimodule $E$. 

The paper is organized as follows. In section \ref{sect:basics}, we set up the notation and recall results about induction and coinduction functors from \cite{SS}. Section \ref{sect 5} is devoted to the proofs of the main results. In section \ref{sect ex}, we give several examples of injective presentations and describe a method for constructing injective resolutions.

\smallskip

\paragraph{\emph{ Historical remark.}} {Corollaries \ref{cor intro} and \ref{corKR} were originally part of the first preprint version of \cite{SS}. Following the suggestion of an anonymous referee, we decided to remove these results from \cite{SS}.

\section{Notation and Preliminaries}
\label{sect:basics}
Throughout this paper all algebras are assumed to be basic, finite dimensional over an algebraically closed field $k$.  Suppose $Q=(Q_0, Q_1)$ is a connected quiver without oriented cycles.  By $kQ$ we denote the path algebra of $Q$.  If $\Lambda$ is a $k$-algebra then denote by mod$\,\Lambda$  the category of finitely generated right $\Lambda$-modules and by ind$\,\Lambda$ a set of representatives of each isoclass of indecomposable right $\Lambda$-modules.  Given $M \in $ mod$\,\Lambda$, the projective dimension of $M$ in mod$\,\Lambda$ is denoted by pd$_\Lambda M$ and its injective dimension by id$_\Lambda M$. Let $\tau$ be the Auslander-Reiten translation.  Also define $\Omega M$ to be the first syzygy and $\Omega^{-1} M$ the first cosyzygy of $M$.  By $D$ we understand the standard duality functor Hom$_k(-,k)$.  Finally, let $\nu=D\text{Hom}_\Lambda(-,\Lambda)$ be the Nakayama functor and $\nu ^{-1}=\text{Hom}_\Lambda(D\Lambda,-)$ be the inverse Nakayama functor. For further details on representation theory we refer to \cite{ASS, S}. 

\subsection{Tilted algebras and cluster-tilted algebras}
 
 Let $A=kQ$ be a hereditary algebra.  We recall, that an $A$-module $T$ is called \emph{tilting} if Ext$_A^1 (T,T)=0$ and the number of indecomposable direct summands of $T$ equals the number of isomorphism classes of simple $A$-modules. The corresponding algebra $C=\text{End}_A T$ is called a \emph{tilted algebra}.  

%\subsection{Cluster categories and cluster-tilted algebras}
Let    $\mathcal{D}=\mathcal{D}^b(\text{mod}\,A)$ denote the derived category of bounded complexes  of $A$-modules. The \emph{cluster category} $\mathcal{C}_A$ is defined as the orbit category of the derived category with respect to the functor $\tau_{\mathcal{D}}^{-1}[1]$, where $\tau_{\mathcal{D}}$ is the Auslander-Reiten translation in the derived category and $[1]$ is the shift.  Cluster categories were introduced in \cite{BMRRT}, and in \cite{CCS} for type $\mathbb{A}$, and were further studied in \cite{K,KR,A,P}.  They are triangulated categories \cite{K}, that  have Serre duality and are 2-Calabi Yau \cite{BMRRT}.  

An object $T$ in $\mathcal{C}_A$ is called \emph{cluster-tilting} if $\textup{Ext}^1_{\mathcal{C}_A}(T,T)=0$ and the number of indecomposable direct summands of $T$ equals the number of isomorphism classes of simple $A$-modules. The endomorphism algebra $\textup{End}_{\mathcal{C}_A}T$ of a cluster-tilting object is called a \emph{cluster-tilted algebra} \cite{BMR}. 
%
%The following theorem will be useful later.
%\begin{thm}\label{2.5} \cite{BMR}
%If $T$ is a cluster-tilting object in $\mathcal{C}_A$, then $\textup{Hom}_{\mathcal{C}_A}(T,-)$ induces an equivalence  of categories
%{\upshape $\mathcal{C}_A/\text{add}(\tau T)\rightarrow \text{mod}\,\text{End}_{\mathcal{C}_A} T$}.
%\end{thm}

\subsection{Relation extensions}
Let $C$ be an algebra of global dimension at most two and let $E$ be the $C$-$C$-bimodule $E=\text{Ext}_C ^2 (DC,C)$.
The \emph{relation extension} of $C$ is the trivial extension algebra $B=C\ltimes E$, whose underlying $C$-module is $C\oplus E$, and multiplication is given by $(c,e)(c',e')=(cc',ce'+ec')$.  
Relation extensions where introduced in \cite{ABS}. In the special case where $C$ is a tilted algebra, we have the following result.

\begin{thm}\cite{ABS}
Let $C$ be a tilted algebra. Then $B=C\ltimes \textup{Ext}_C ^2 (DC,C)$ is a cluster-tilted algebra. Moreover all cluster-tilted algebras are of this form.
\end{thm}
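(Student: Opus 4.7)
The plan is to realize $C$ as the endomorphism algebra $\textup{End}_A(T)$ of a tilting module $T$ over a hereditary algebra $A$, to view $T$ inside the cluster category $\mathcal{C}_A$, and to identify the resulting endomorphism algebra in $\mathcal{C}_A$ with the claimed trivial extension $C\ltimes E$. Since $\textup{End}_{\mathcal{C}_A}(T)$ is cluster-tilted by definition, this settles the first assertion.

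First I would verify that the image of $T$ in $\mathcal{C}_A$ is a cluster-tilting object: the number of indecomposable summands is inherited from $T$, and the rigidity $\textup{Ext}^{1}_{\mathcal{C}_A}(T,T)=0$ follows from $\textup{Ext}^{1}_A(T,T)=0$ combined with the $2$-Calabi--Yau duality that controls the only other potentially nonzero summand. I would then expand $\textup{End}_{\mathcal{C}_A}(T)$ from the orbit definition
\[
\textup{Hom}_{\mathcal{C}_A}(T,T)\;=\;\bigoplus_{i\in\mathbb{Z}}\textup{Hom}_{\mathcal{D}}\bigl(T,F^{i}T\bigr),\qquad F=\tau_{\mathcal{D}}^{-1}[1].
\]
Because $A$ is hereditary and $T$ lives in degree $0$, only the $i=0$ and $i=1$ terms survive, yielding the splitting $\textup{End}_A(T)\oplus\textup{Hom}_{\mathcal{D}}(T,\tau_{\mathcal{D}}^{-1}T[1])$. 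Using the derived equivalence $\textup{RHom}_A(T,-)\colon\mathcal{D}^b(\textup{mod}\,A)\to\mathcal{D}^b(\textup{mod}\,C)$, which sends $T$ to $C$ and intertwines Nakayama twists (together with $\tau_{\mathcal{D}}=\nu[-1]$ for hereditary $A$), I would translate the second summand into $\textup{Hom}_{\mathcal{D}^b(C)}(C,\nu^{-1}C[2])$ and then apply a tensor--Hom adjunction to identify it with $\textup{Ext}^{2}_C(DC,C)=E$.

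The most delicate step, and the main obstacle, is matching the multiplication on the decomposition $C\oplus E$ with the trivial extension rule $(c,e)(c',e')=(cc',ce'+ec')$. Compatibility of the $C$-bimodule action on $E$ is essentially formal from the naturality of the derived equivalence, but verifying that the product of two elements of $E$ vanishes amounts to showing that composites landing in $\textup{Hom}_{\mathcal{D}}(T,F^{2}T)$ contribute zero to $\textup{End}_{\mathcal{C}_A}(T)$. This follows from $F^{2}T$ sitting in a range of degrees to which $T$ admits no nonzero morphisms in $\mathcal{D}$, but establishing it cleanly is where careful bookkeeping in the derived category is needed.

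Finally, for the ``moreover'' assertion I would invoke the BMRRT machinery to reduce any cluster-tilting object in any cluster category $\mathcal{C}_A$ to one of the form $T$ for a tilting module over a hereditary algebra derived-equivalent to $A$; the construction above then writes the associated cluster-tilted algebra as a relation extension of the corresponding tilted algebra.
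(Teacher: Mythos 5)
This theorem is not proved in the paper you are working from; it is stated with a citation to [ABS] (Assem--Br\"ustle--Schiffler, \emph{Cluster-tilted algebras as trivial extensions}), and no argument is reproduced. Your sketch is, however, a correct reconstruction of the strategy used in that reference: decompose $\textup{End}_{\mathcal{C}_A}(T)=\bigoplus_i\textup{Hom}_{\mathcal{D}}(T,F^iT)$, observe that for hereditary $A$ only the $i=0,1$ pieces survive (giving $C$ and $E$), identify the $i=1$ piece with $\textup{Ext}^2_C(DC,C)$ via the tilting equivalence together with $\tau_{\mathcal{D}}=\nu[-1]$, note that any $E\cdot E$ composite lands in $\textup{Hom}_{\mathcal{D}}(T,F^2T)=\textup{Ext}^2_A(T,\tau^{-2}T)=0$, and then use the BMRRT result that every cluster-tilting object arises from a tilting module over some hereditary algebra derived-equivalent to $A$ to get surjectivity. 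So the proposal is sound and matches the cited proof; there is no alternative argument in the present paper to compare it against.
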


 \begin{remark}
This shows that the tilted algebra $C$ is a subalgebra and a quotient of the cluster-tilted algebra $B$.  Let $\pi: B\to C$ be the corresponding surjective algebra homomorphism.  We will often consider a $C$-module $M$ as a $B$-module with action $M\cdot b = M\cdot \pi(b)$.  In particular, $C$ and $E$ are right $B$-modules, and there exists a short exact sequence in mod$\,B$
\begin{equation}\label{(1)}    \xymatrix{0\ar[r]&E\ar[r]^i&B\ar[r]^{\pi}&C\ar[r]&0}. \end{equation}
\end{remark}

\subsection{Induction and Coinduction Functors}\label{sect 3}

In this section we define two functors called induction and coinduction and describe some general results about them. For proofs we refer to \cite{SS}.  Suppose there are two $k$-algebras $C$ and $B$ with the property that $C$ is a subalgebra of $B$ and they share the same identity.  Then there is a general construction via the tensor product, also known as \emph{extension of scalars}, that sends a $C$-module to a particular $B$-module.  We give a precise definition below.
\begin{defn}
Let $C$ be a subalgebra of $B$, such that $1_C=1_B$, then 
$$-\otimes _C B:  \:  \text{mod}\,C \rightarrow \text{mod}\,B$$ 
is called the {\em{induction functor}}, and dually  
$$D(B\otimes_C D-): \: \text{mod}\,C \rightarrow \text{mod}\,B$$ 
is called the {\em{coinduction functor}}.   Moreover, given $M\in$ mod$\,C$ the corresponding {\em{induced module}} is defined to be $M\otimes _C B$, and the {\em{coinduced module}} is defined to be $D(B\otimes _C DM)$.  
\end{defn}
 
First observe that both functors are covariant.  The induction functor is right exact, while the coinduction functor is left exact. 

\begin{lem} \label{*}
Let $C$ and $B$ be two $k$-algebras and $N$ a $C$-$B$-bimodule, then {\upshape{
$$M\otimes _ C N \cong D \text{Hom} _C ( M, DN)$$}}
 as $B$-modules for all {\upshape{$M \in \text{mod} \, C$}}. 
\end{lem}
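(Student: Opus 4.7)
The plan is to derive the isomorphism from the standard tensor–Hom adjunction over the field $k$, followed by applying the duality $D$ and using that $D^2$ is naturally isomorphic to the identity on finite-dimensional modules. The only thing one really has to watch out for is that the chain of isomorphisms is compatible with the right $B$-module structures on both sides, which come from the right $B$-action on $N$.

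First I would set up the bimodule structures explicitly. Since the paper uses right modules, a $C$-$B$-bimodule $N$ means $N$ carries a left $C$-action and a right $B$-action, so $M \otimes_C N$ is a well-defined right $B$-module via $(m\otimes n)\cdot b = m\otimes (nb)$. Dually, $DN = \mathrm{Hom}_k(N,k)$ is a right $C$, left $B$-bimodule, with left $B$-action $(b\cdot \varphi)(n) = \varphi(nb)$. Hence $\mathrm{Hom}_C(M,DN)$ is a left $B$-module by $(b\cdot f)(m) = b\cdot f(m)$, and $D\mathrm{Hom}_C(M,DN)$ becomes a right $B$-module in the usual way.

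Next I would invoke the classical adjunction
\[
\mathrm{Hom}_C(M,\mathrm{Hom}_k(N,k)) \;\cong\; \mathrm{Hom}_k(M\otimes_C N,k),
\]
that is, $\mathrm{Hom}_C(M,DN) \cong D(M\otimes_C N)$, given by the assignment $f\mapsto \tilde f$ with $\tilde f(m\otimes n)= f(m)(n)$. A short verification shows this is $B$-equivariant: $(b\cdot f)(m)(n) = f(m)(nb)$, which matches $\tilde f\bigl((m\otimes n)\cdot b\bigr) = \tilde f(m\otimes nb) = f(m)(nb)$, so the isomorphism intertwines the left $B$-actions on both sides (where the left $B$-action on $D(M\otimes_C N)$ is the one dual to the right $B$-action on $M\otimes_C N$).

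Finally, applying $D$ to both sides and using that $D^2\cong \mathrm{id}$ on finitely generated modules (which is applicable because $M$ and $N$, and hence $M\otimes_C N$, are finite dimensional over $k$), I obtain
\[
D\,\mathrm{Hom}_C(M,DN) \;\cong\; D\,D(M\otimes_C N) \;\cong\; M\otimes_C N
\]
as right $B$-modules, as desired. The main (really only) subtlety is the bookkeeping of the left/right actions of $C$ and $B$ under the two applications of $D$; once the conventions are fixed the result is essentially a formal consequence of tensor–Hom adjunction and finite-dimensional duality.
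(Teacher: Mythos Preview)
Your proof is correct and follows essentially the same approach as the paper: the paper writes the chain $M\otimes_C N \cong D\,\mathrm{Hom}_k(M\otimes_C N,k)\cong D\,\mathrm{Hom}_C(M,\mathrm{Hom}_k(N,k))\cong D\,\mathrm{Hom}_C(M,DN)$, applying double duality first and then tensor--Hom adjunction, whereas you apply adjunction first and then dualize; the ingredients are identical. Your version is in fact more careful, since you verify the $B$-equivariance of the adjunction map, a point the paper leaves implicit.
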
  
\begin{proof}
$M\otimes_C N \cong D \text{Hom}_k( M\otimes_C N, k)\cong D\text{Hom}_C(M, \text{Hom}_k (N, k))\cong D\text{Hom}_C (M, DN)$.  
\end{proof}

The next proposition describes an alternative definition of these functors, and we will use these two descriptions interchangeably.
 
\begin{prop}\label{3.3}
Let $C$ be a subalgebra of $B$ such that $1_C=1_B$, then  for every $M\in$ {\upshape{mod}}$\,C$\\
\upshape{ \indent (a) $M\otimes_C B\cong D\text{Hom}_C(M,DB)$.\\
\indent (b) $D(B\otimes_C DM)\cong \text{Hom}_C(B, M)$.}
\end{prop}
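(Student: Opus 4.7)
The plan is to handle the two parts separately, both times by reducing to the tensor-hom adjunction between $k$-modules and $C$-modules, together with the fact that $DD \cong \mathrm{id}$ on finite-dimensional modules.

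For part (a), the observation is that $B$ itself is a $C$-$B$-bimodule: the left $C$-action comes from the inclusion $C \hookrightarrow B$ (with $1_C = 1_B$), and the right $B$-action is the regular one. Therefore Lemma \ref{*} applies directly with $N = B$, and it yields
\[
M \otimes_C B \;\cong\; D\,\mathrm{Hom}_C(M, DB)
\]
as right $B$-modules. So (a) is essentially a one-line corollary of Lemma \ref{*}.

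For part (b), one cannot simply quote Lemma \ref{*} because $B \otimes_C DM$ is a left $B$-module (built from the $B$-$C$-bimodule $B$ tensored with the left $C$-module $DM$), so the asymmetry in the statement of Lemma \ref{*} does not match. Instead, the plan is to run the same chain of adjunctions that proved Lemma \ref{*}, but on the other side. Explicitly,
\[
D(B \otimes_C DM) \;=\; \mathrm{Hom}_k(B \otimes_C DM,\, k) \;\cong\; \mathrm{Hom}_C\bigl(B,\, \mathrm{Hom}_k(DM, k)\bigr) \;=\; \mathrm{Hom}_C(B, DDM) \;\cong\; \mathrm{Hom}_C(B, M),
\]
where the middle isomorphism is tensor-hom adjunction (with $B$ regarded as a right $C$-module via the inclusion), and the last step uses $DDM \cong M$ for finite-dimensional $M$.

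The only point that requires any care --- and hence the part I expect to write out most explicitly --- is checking that all these isomorphisms are compatible with the right $B$-module structures on both sides. On the left-hand side of (b), the right $B$-module structure on $D(B \otimes_C DM)$ comes from the left $B$-action on $B$ inside the tensor product (which becomes a right action after applying $D$). On the right-hand side, the right $B$-action on $\mathrm{Hom}_C(B, M)$ is the usual one, $(f \cdot b)(x) = f(bx)$. Tracing through the tensor-hom adjunction, one verifies that the left $B$-action on $B$ in the tensor product corresponds exactly to precomposition with left multiplication in the hom, so the isomorphism is $B$-linear as required. The same verification is implicit in part (a) via Lemma \ref{*}, so both parts rest on the same bookkeeping of bimodule actions.
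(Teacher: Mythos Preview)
Your proposal is correct. The paper does not actually include a proof of this proposition; it states at the start of the subsection that ``For proofs we refer to \cite{SS}.'' Your argument is the standard one: part (a) is an immediate instance of Lemma~\ref{*} with $N=B$, and part (b) is the mirror-image tensor--hom adjunction together with $DDM\cong M$, with the $B$-linearity checked by tracing the action through the adjunction. This is exactly the expected proof.
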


We shall need the following basic properties of these functors. 

\begin{prop} \label{3.4}
Let $C$ be a subalgebra of $B$ such that $1_C=1_B$.  If $e$ is an idempotent then\\
\indent {\upshape {(a)}} $(eC)\otimes _C B\cong eB$.\\
\indent{\upshape{(b)}} $D(B\otimes_C Ce)\cong DBe$. \\
In particular, if $P(i)$ and $I(i)$ are indecomposable projective and injective $C$-modules at vertex $i$, then $P(i)\otimes_C B$ and $D(B\otimes _C D I(i))$ are respectively indecomposable projective and injective $B$-modules at vertex $i$.  
\end{prop}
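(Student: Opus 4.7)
The plan is to handle (a) and (b) essentially in parallel, in each case using the direct sum decomposition of $C$ coming from the idempotent $e$, together with the fact that $C \otimes_C B \cong B$ (on the appropriate side) via multiplication.

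For (a), since $e$ is an idempotent in $C$, I have the splitting $C = eC \oplus (1-e)C$ as right $C$-modules. Applying the induction functor $- \otimes_C B$, I obtain
$$(eC)\otimes_C B \,\oplus\, ((1-e)C)\otimes_C B \;\cong\; C\otimes_C B \;\cong\; B \;=\; eB\oplus (1-e)B$$
as right $B$-modules. Under the canonical isomorphism $C \otimes_C B \to B$ given by $c \otimes b \mapsto cb$, the summand $(eC)\otimes_C B$ is mapped precisely onto $eB$; equivalently, the map $ec \otimes b \mapsto ecb$ is a well-defined right $B$-module homomorphism $(eC)\otimes_C B \to eB$ with inverse $eb \mapsto e \otimes b$, where the inverse makes sense because $e \in C \subseteq B$.

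For (b), I apply the same idea to the decomposition $C = Ce \oplus C(1-e)$ as left $C$-modules. Tensoring on the left with the $B$-$C$-bimodule $B$ gives
$$B\otimes_C Ce \,\oplus\, B\otimes_C C(1-e) \;\cong\; B\otimes_C C \;\cong\; B \;=\; Be\oplus B(1-e)$$
as left $B$-modules, and matching summands yields $B \otimes_C Ce \cong Be$ via $b \otimes ce \mapsto bce$. Applying the standard duality $D$, which interchanges left and right module structures, produces the isomorphism $D(B\otimes_C Ce) \cong D(Be) = DBe$ of right $B$-modules, as required.

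The ``In particular'' claim is then immediate. Taking $e = e_i$ the primitive idempotent at vertex $i$, the indecomposable projective right $C$-module is $P(i) = e_iC$ and the indecomposable injective is $I(i) = D(Ce_i)$. By (a), $P(i)\otimes_C B \cong e_iB$, which is the indecomposable projective $B$-module at $i$. For the injectives, $DI(i) = Ce_i$, so by (b) the coinduced module is $D(B\otimes_C Ce_i) \cong D(Be_i)$, which is the indecomposable injective $B$-module at $i$. The only potential obstacle is careful bookkeeping of left versus right module structures across the duality, but once the direct-sum decomposition of $C$ associated with $e$ is applied on the correct side, everything is formal.
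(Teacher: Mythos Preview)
The paper does not actually give a proof of this proposition; at the start of the subsection it states ``For proofs we refer to \cite{SS}.'' So there is nothing to compare against here. Your argument is correct and is the standard one: the isomorphism $C\otimes_C B\cong B$ (respectively $B\otimes_C C\cong B$) is compatible with the idempotent splitting $C=eC\oplus(1-e)C$ (respectively $C=Ce\oplus C(1-e)$), and matching summands gives (a) and (b). The ``in particular'' clause is exactly the specialization $e=e_i$, $P(i)=e_iC$, $I(i)=D(Ce_i)$ you describe.
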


We can say more in the situation when $B$ is  a split extension of $C$.
  
\begin{defn}
Let $B$ and $C$ be two algebras.  We say {\em{$B$ is a split extension of $C$ by a nilpotent bimodule $E$}} if there exist algebra homomorphisms $\pi: B\to C$ and $\sigma: C \to B$ such that $\pi\sigma$ is the identity on $C$ and $E=\text{ker}\,\pi$ is a nilpotent (two-sided) ideal of $B$.   

In particular, there exists the following short exact sequence of $B$-modules. 
\begin{equation}\label{2}    \xymatrix{0\ar[r]&E\ar[r]^{i}&B \ar@<.5ex>[r]^{\pi}&C \ar@<.5ex>[l]^{\sigma}\ar[r]&0} \end{equation}
\end{defn}

\noindent For example, relation extensions are split extensions.

If $B$ is a split extension of $C$ then $\sigma$ is injective, which means $C$ is a subalgebra of $B$.  Also, $E$ is a $C$-$C$-bimodule, and we require $E$ to be nilpotent so that $1_B=1_C$.   Observe that $B\cong C\oplus E$ as $C$-modules, and there is an isomorphism of $C$-modules $M\otimes_C B\cong M\otimes _C C\oplus M\otimes _C E\cong M\oplus M\otimes_C E$.  Similarly, $D(B\otimes_C DM)\cong M\oplus D(E\otimes_C DM)$ as $C$-modules.  This shows that induction and coinduction of a module $M$ yields the same module $M$ plus possibly something else.  The next proposition shows a precise relationship between a given $C$-module and its image under the induction and coinduction functors. 

\begin{prop} \label{3.6}
Suppose $B$ is a split extension of $C$ by a nilpotent bimodule $E$, then for every  $M\in$ {\upshape{mod}}$\,C$ there exist two short exact sequences of $B$-modules:\\
\indent {\upshape{(a)}} \xymatrix{0\ar[r]&M\otimes_C E\ar[r]&M\otimes_C B\ar[r]& M\ar[r]&0}.\\
\indent {\upshape{(b)}} \xymatrix{0\ar[r]&M\ar[r]&D(B\otimes_C DM)\ar[r]& D(E\otimes_C DM)\ar[r]&0}.
\end{prop}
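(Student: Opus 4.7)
The plan is to deduce both short exact sequences from the split short exact sequence (\ref{2}) by applying appropriate functors. The splitting map $\sigma\colon C\to B$ is a homomorphism of $C$-$C$-bimodules, so the sequence $0\to E\to B\to C\to 0$ splits in the category of $C$-$C$-bimodules, and in particular in the categories of right and of left $C$-modules. Consequently it remains exact after applying any additive functor out of either of these categories.

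For part (a), I would apply the functor $M\otimes_C-$, viewing the sequence as a sequence of left $C$-modules (with the right $B$-module structure along for the ride, so that all maps become right $B$-module homomorphisms). Since the sequence is split, the result
$$
\xymatrix{0\ar[r]&M\otimes_C E\ar[r]&M\otimes_C B\ar[r]& M\otimes_C C\ar[r]&0}
$$
is still short exact (as right $B$-modules). Using the canonical identification $M\otimes_C C\cong M$ yields the desired sequence.

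For part (b), I would apply $\mathrm{Hom}_C(-,M)$, viewing (\ref{2}) as a split sequence of left $C$-modules. Splitness again preserves exactness, giving
$$
\xymatrix{0\ar[r]&\mathrm{Hom}_C(C,M)\ar[r]&\mathrm{Hom}_C(B,M)\ar[r]&\mathrm{Hom}_C(E,M)\ar[r]&0}
$$
as right $B$-modules. Then I identify the terms: $\mathrm{Hom}_C(C,M)\cong M$; by Proposition~\ref{3.3}(b), $\mathrm{Hom}_C(B,M)\cong D(B\otimes_C DM)$; and by Lemma~\ref{*} applied with $N=DM$ (or by the same direct Hom-tensor-duality computation used to prove Proposition~\ref{3.3}(b)), $\mathrm{Hom}_C(E,M)\cong D(E\otimes_C DM)$. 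Substituting these identifications produces the sequence in (b).

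The only subtlety is bookkeeping of module structures: one must check that the functors $M\otimes_C-$ and $\mathrm{Hom}_C(-,M)$ produce right $B$-module maps (which they do because $E$, $B$, $C$ are $C$-$B$-bimodules and $B$-$C$-bimodules respectively, with the $B$-actions commuting with the $C$-actions used to form the tensor/Hom), and that the various canonical isomorphisms are indeed $B$-linear. There is no genuine obstacle; the essence is ``a functor applied to a split exact sequence remains exact,'' combined with the identifications already established in Proposition~\ref{3.3} and Lemma~\ref{*}.
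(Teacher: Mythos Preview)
Your argument is correct. The paper itself does not give a proof of this proposition; it simply cites \cite{SS} for the proofs of all results in Section~\ref{sect 3}. So there is nothing to compare against here, and your approach---apply an additive functor to the split exact sequence~(\ref{2}) of $C$-$C$-bimodules, then identify the terms---is exactly the natural one and is essentially what one finds in the cited reference.

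One small wording point: in part~(b) you say you view~(\ref{2}) as a split sequence of \emph{left} $C$-modules, but in the paper's right-module conventions the functor $\mathrm{Hom}_C(-,M)$ is applied to right $C$-modules (with the left $B$-action on $E,B,C$ inducing the right $B$-module structure on the Hom groups). This is harmless, since $\sigma$ is a $C$-$C$-bimodule map and hence the sequence splits on both sides; just be consistent with the paper's conventions when you write it up.
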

Thus, in this situation each module is a quotient of its induced module and a submodule of its coinduced module.

\subsection{ Induced and Coinduced Modules over Relation Extension Algebras}\label{sect 4}

In this subsection, we recall several results from \cite{SS} on the  properties of the induction and coinduction functors in the case  where $C$ is an algebra of global dimension at most two and $B$ is its relation extension. 
Throughout this subsection, %we always assume $E=\text{Ext}_C^2 (DC,C)$ and $B=C\ltimes E$. In the specific case when $C$ is also a tilted algebra, $B$ is the corresponding cluster-tilted algebra.  
all tensor products $\otimes$ are tensor products over $C$.

 We begin by establishing some properties of $E$ and $DE$.

\begin{prop} \label{4.1}\cite{SS}
Let $C$ be an algebra of global dimension at most 2. Then \\
{\upshape{\indent (a) $E\cong \tau^{-1}\Omega^{-1} C $.\\
\indent (b) $DE\cong \tau\Omega DC$.\\
\indent (c) $M\otimes E\cong \tau^{-1}\Omega^{-1} M$.\\
\indent (d) $D(E\otimes DM)\cong \tau\Omega M$.}}
\end{prop}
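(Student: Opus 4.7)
My approach is to prove parts (c) and (d) first, since (a) and (b) follow as the specializations $M = C$ and $M = DC$ respectively (using $C \otimes_C E = E$, $E \otimes_C C = E$, and the definition $E = \textup{Ext}^2_C(DC, C)$). Throughout I will use freely that $\textup{gl.dim}\, C \leq 2$ implies $\textup{pd}\, M \leq 2$ and $\textup{id}\, M \leq 2$ for every $M \in \textup{mod}\, C$.

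For (c), the plan is to chain two isomorphisms
$$M \otimes_C E \;\cong\; \textup{Ext}^2_C(DC, M) \;\cong\; \tau^{-1}\Omega^{-1} M.$$
For the first, a minimal projective resolution $0 \to P_2 \to P_1 \to P_0 \to DC \to 0$ presents $E$ as $\textup{coker}(P_1^* \to P_2^*)$, where $P^* = \textup{Hom}_C(P, C)$. Tensoring with $M$ on the left and invoking the standard adjunction $M \otimes_C P^* \cong \textup{Hom}_C(P, M)$ for finitely generated projective $P$ turns $M \otimes_C E$ into $\textup{coker}(\textup{Hom}_C(P_1, M) \to \textup{Hom}_C(P_2, M)) = \textup{Ext}^2_C(DC, M)$. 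For the second, fix an injective resolution $0 \to M \to I^0 \to I^1 \to I^2 \to 0$ of $M$: on one hand $\textup{Ext}^2_C(DC, M)$ is the second cohomology of $\nu^{-1}I^\bullet$, equal to $\textup{coker}(\nu^{-1}I^1 \to \nu^{-1}I^2)$; on the other, applying $\nu^{-1}$ to the injective copresentation $0 \to \Omega^{-1}M \to I^1 \to I^2$ of $\Omega^{-1}M$ produces the standard four-term exact sequence
$$0 \to \nu^{-1}\Omega^{-1} M \to \nu^{-1}I^1 \to \nu^{-1}I^2 \to \tau^{-1}\Omega^{-1} M \to 0,$$
identifying $\tau^{-1}\Omega^{-1} M$ with the same cokernel.

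Part (d) is dual. A minimal projective resolution $0 \to Q_2 \to Q_1 \to Q_0 \to M \to 0$ and the Nakayama functor $\nu$ play the roles of the injective resolution of $M$ and $\nu^{-1}$ above: applying $\textup{Hom}_C(-, C)$ followed by $D$ to the resolution, and applying $\nu$ to the projective presentation $Q_2 \to Q_1 \to \Omega M \to 0$, both identify $D\textup{Ext}^2_C(M, C)$ and $\tau\Omega M$ with $\ker(\nu Q_2 \to \nu Q_1)$. The adjunction $\textup{Hom}_k(E \otimes_C DM,\, k) \cong \textup{Hom}_C(E, M)$ rewrites $D(E \otimes_C DM)$ as $\textup{Hom}_C(E, M)$, which one identifies with $D\textup{Ext}^2_C(M, C)$ using the projective presentation of $E$ extracted from the four-term sequence in the proof of (a). The main obstacle throughout is the careful bookkeeping of left/right $C$-module structures across the various tensor products, Hom spaces and dualities; once that is in hand, each step reduces to a routine comparison of cokernels or kernels of maps between explicit projective or injective modules.
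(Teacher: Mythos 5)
The paper itself does not prove this proposition---it is imported from the reference [SS]---so I compare your argument to the standard one rather than to a proof written in this paper. Your chain in part (c), $M \otimes_C E \cong \textup{Ext}^2_C(DC, M) \cong \tau^{-1}\Omega^{-1}M$, is the natural route and is correct: the first isomorphism follows from the length-two projective resolution of $DC$, right exactness of $M\otimes_C -$, and the isomorphism $M\otimes_C P^*\cong\textup{Hom}_C(P,M)$ for finitely generated projective $P$; the second from computing $\textup{Ext}^2_C(DC,M)$ via a \emph{minimal} injective resolution of $M$ and comparing with the four-term sequence obtained by applying $\nu^{-1}$ to the minimal injective copresentation of $\Omega^{-1}M$. (Do say ``minimal'' for the injective resolution of $M$; otherwise the identification of the cokernel with $\tau^{-1}\Omega^{-1}M$ can fail.) Part (d) is indeed dual, but the one step you gesture at without supplying is the isomorphism $\textup{Hom}_C(E,M)\cong D\textup{Ext}^2_C(M,C)$. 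It does go through: apply $\textup{Hom}_C(-,M)$ to the projective presentation $\nu^{-1}I^1\to\nu^{-1}I^2\to E\to 0$ produced in part (a), use that for an injective $I$ one has $\textup{Hom}_C(\nu^{-1}I,M)\cong M\otimes_C DI\cong D\textup{Hom}_C(M,I)$ (this is precisely where the left/right bookkeeping you mention actually bites), and recognize the resulting kernel as $D$ of $\textup{coker}\bigl(\textup{Hom}_C(M,I^1)\to\textup{Hom}_C(M,I^2)\bigr)=\textup{Ext}^2_C(M,C)$, now computed from the injective resolution of $C$. One further point worth recording explicitly: the right $C$-module structure on $E$ used in $\textup{Hom}_C(E,M)$ is the one carried by $\tau^{-1}\Omega^{-1}C$, and one needs part (a) to be an isomorphism of right $C$-modules to transport the presentation; this is true, but it is one of the bimodule-structure checks that should not be waved away. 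With those two items filled in, the proof is complete and, as far as I can tell, follows the same homological route as the cited source.
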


The following result gives  homological conditions under which induction or coinduction is trivial.
\begin{prop}\label{4.3}\cite{SS}
Let $C$ be an algebra of global dimension at most 2, and let $B=C\ltimes E$.  Suppose $M\in$ {\upshape{mod}}$\,C$,  then \\
\indent{\upshape{(a)}} {\upshape{id}}$_C M\leq 1$ if and only if $M\otimes B\cong M$.\\
\indent{\upshape{(b)}} {\upshape{pd}}$_C M\leq 1$ if and only if $D(B\otimes DM)\cong M$.
\end{prop}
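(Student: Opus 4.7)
The plan is to combine the two short exact sequences provided by Proposition \ref{3.6} with the identifications of Proposition \ref{4.1}, and then invoke the standard fact that the Auslander--Reiten translate $\tau^{-1}$ (resp.\ $\tau$) vanishes precisely on injective (resp.\ projective) modules. The two statements (a) and (b) are completely dual, so I would write (a) in detail and then indicate that (b) follows by the dual argument.

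For (a), I would start from the short exact sequence of $B$-modules
\[
0 \to M\otimes E \to M\otimes B \to M \to 0
\]
given by Proposition \ref{3.6}(a). Since everything is finite dimensional over $k$, a $k$-dimension count on this sequence shows immediately that $M\otimes B\cong M$ as $B$-modules if and only if $M\otimes E=0$. By Proposition \ref{4.1}(c), $M\otimes E\cong \tau^{-1}\Omega^{-1}M$, so the condition becomes $\tau^{-1}\Omega^{-1}M=0$, which is equivalent to $\Omega^{-1}M$ being injective. Finally, from the minimal injective envelope $0\to M\to I^0\to \Omega^{-1}M\to 0$, the module $\Omega^{-1}M$ is injective if and only if $\textup{id}_CM\leq 1$, finishing (a). For (b) I would run the dual chain of equivalences using Proposition \ref{3.6}(b) and Proposition \ref{4.1}(d): a dimension count yields $D(B\otimes DM)\cong M$ iff $D(E\otimes DM)=0$ iff $\tau\Omega M=0$, and since $\tau N=0$ iff $N$ is projective, this is equivalent to $\Omega M$ being projective, i.e., $\textup{pd}_CM\leq 1$.

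The only mildly subtle point, and the main obstacle to watch for, is the implication ``$M\otimes B\cong M$ $\Rightarrow$ $M\otimes E=0$,'' because an abstract $B$-linear isomorphism $M\otimes B\cong M$ need not be the quotient map appearing in the short exact sequence of Proposition \ref{3.6}(a); one cannot simply ``cancel'' it. This is what the dimension count resolves, using finite dimensionality over $k$. Once this observation is in hand, the entire proof is a transparent assembly of the identifications of Propositions \ref{3.6} and \ref{4.1} together with the vanishing criterion for $\tau$ and $\tau^{-1}$.
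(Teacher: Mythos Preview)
Your argument is correct. Note, however, that the paper does not actually prove this proposition; it is merely recalled from \cite{SS}, so there is no in-paper proof to compare against. Your approach---reducing to the vanishing of $M\otimes E$ (resp.\ $D(E\otimes DM)$) via the short exact sequence of Proposition~\ref{3.6} and a $k$-dimension count, then invoking Proposition~\ref{4.1} and the standard characterization of when $\tau^{\pm 1}$ vanishes---is the natural one and is essentially how the result is established in \cite{SS} as well. Your explicit remark that the implication $M\otimes B\cong M\Rightarrow M\otimes E=0$ requires the finite-dimensionality (rather than a naive cancellation in the short exact sequence) is well taken.
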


The next lemma is a slight modification of a result of \cite{SS}, and we include a proof for completeness.
\begin{lem}\label{4.6}\cite{SS}
Let $C$ be an algebra of global dimension at most 2,  then for any $M\in\textup{mod}\,C$\\
\indent {\upshape{(a) Ext$_C^1(M\otimes E,C)=0$.\\
\indent (b) Ext$_C^1(DC, D(E\otimes DM) )=0$.\\
\indent (c) Ext$_C^1 (E, M\otimes E)=0$.\\       
\indent (d) Ext$_C^1 (D(E\otimes DM),DE)=0$.}}
\end{lem}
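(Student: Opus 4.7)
The plan is to invoke Proposition~\ref{4.1} to rewrite each of the four Ext groups as $\text{Ext}^1$ of some $\tau^{-1}\Omega^{-1}$ or $\tau\Omega$ expression, and then to apply the Auslander--Reiten duality formula
$$D\,\text{Ext}^1_C(X,Y)\;\cong\;\overline{\text{Hom}}_C(Y,\tau X)\;\cong\;\underline{\text{Hom}}_C(\tau^{-1}Y,X),$$
where $\overline{\text{Hom}}$ (resp.\ $\underline{\text{Hom}}$) denotes morphisms modulo those factoring through an injective (resp.\ projective) $C$-module. Each part of the lemma then reduces to showing that a certain Hom group is zero modulo this ideal, and this in turn is a one-line lifting problem settled by the short exact sequence defining $\Omega$ or $\Omega^{-1}$.

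For part (a), substituting $M\otimes E\cong\tau^{-1}\Omega^{-1}M$ and using the first AR isomorphism reduces the vanishing to $\overline{\text{Hom}}_C(C,\Omega^{-1}M)=0$; any injective summand appearing in $\tau\tau^{-1}\Omega^{-1}M$ is automatically absorbed in $\overline{\text{Hom}}$. The defining sequence $0\to M\to I\to\Omega^{-1}M\to 0$, with $I$ the injective envelope of $M$, combined with $\text{Ext}^1_C(C,M)=0$ (since $C$ is projective), shows that every morphism $C\to\Omega^{-1}M$ lifts to $C\to I$ and hence factors through the injective $I$. Part (b) is the mirror argument: rewrite $D(E\otimes DM)\cong\tau\Omega M$, apply the second AR isomorphism to reduce to $\underline{\text{Hom}}_C(\Omega M,DC)=0$, and use the projective cover sequence $0\to\Omega M\to P\to M\to 0$ together with the injectivity of $DC$ to extend any morphism $\Omega M\to DC$ across $P$.

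Parts (c) and (d) then follow from (a) and (b) by the same template, with the Ext vanishings just proved supplying the lifting step. For (c), the AR formula reduces $\text{Ext}^1_C(E,M\otimes E)$ to $\overline{\text{Hom}}_C(\tau^{-1}\Omega^{-1}M,\Omega^{-1}C)=0$; the sequence $0\to C\to I\to\Omega^{-1}C\to 0$, with $I$ the injective envelope of $C$, combined with part (a) of the lemma, lifts every morphism $\tau^{-1}\Omega^{-1}M\to\Omega^{-1}C$ to $I$. Part (d) is symmetric, using $0\to\Omega DC\to P\to DC\to 0$ together with part (b) to factor any $\Omega DC\to\tau\Omega M$ through $P$. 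The only delicate point is the bookkeeping around the fact that $\tau$ and $\tau^{-1}$ are well-defined only up to projective and injective summands, but these summands are exactly what is killed in the relevant quotient Hom groups, so the argument is clean; beyond that, everything is mechanical, and the four statements are really one lifting argument carried out twice, with (c) and (d) depending on (a) and (b) by a second application of the same trick.
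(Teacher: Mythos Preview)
Your proposal is correct and follows essentially the same route as the paper's proof: rewrite via Proposition~\ref{4.1}, apply the Auslander--Reiten formula to reduce to an $\overline{\text{Hom}}$ (or $\underline{\text{Hom}}$) vanishing, and then use the defining (co)syzygy sequence together with the appropriate Ext vanishing to produce the required lifting. The paper only writes out parts (a) and (c) and leaves (b), (d) to duality, exactly as you do; your treatment of (c) as a second pass using the already-established (a) matches the paper verbatim.
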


\begin{proof}
We will show parts (a) and (c), and the rest of the lemma can be proven similarly.  \\
Part (a).  By Proposition \ref{4.1}(c), we see that Ext$^1_C (M\otimes E,C)\cong \text{Ext}^1_C(\tau^{-1}\Omega^{-1} M, C)$, which in turn by the Auslander-Reiten formula is isomorphic to $D\overline{\text{Hom}}_C (C,\Omega^{-1} M)$.  Let $i: M\rightarrow I$ be an injective envelope of $M$, thus we have the following short exact sequence 
$$ \xymatrix{0\ar[r]&M\ar[r]^i &I\ar[r]^-{\pi}&\Omega^{-1}M\ar[r]&0}. $$
Applying Hom$_C(C,-)$ to this sequence we obtain an exact sequence
 $$\xymatrix{0\ar[r]& \text{Hom}_C(C,M)\ar[r]&\text{Hom}_C (C,I)\ar[r]^-{\pi_*}&\text{Hom}_C(C,\Omega^{-1}M)\ar[r]&\text{Ext}^1_C(C,M)}.$$ 
However,  Ext$^1_C(C,M)=0$ shows that $\pi_*$ is surjective.  This implies that every morphism from $C$ to $\Omega^{-1}M$ factors through the injective $I$. Thus,  $\overline{\text{Hom}}_C(C,\Omega^{-1}M)=0$, and this shows part (a). \\
Part (c).  As above observe that Ext$^1_C(E,M\otimes E)\cong D \overline{\text{Hom}}_C(M\otimes E,\Omega^{-1}C)$.  Let $j: C\rightarrow J$ be an injective envelope of $C$, thus we have the following short exact sequence 
$$ \xymatrix{0\ar[r]&C\ar[r]^j &J\ar[r]^-{\rho}&\Omega^{-1}C\ar[r]&0}, $$
Applying Hom$_C(M\otimes E,-)$ to this sequence we get  an exact sequence
 $$\xymatrix{0\ar[r]& \text{Hom}_C(M\otimes E,C)\ar[r]&\text{Hom}_C (M\otimes E,J)\ar[r]^-{\rho_{*}}&\text{Hom}_C(M\otimes E,\Omega^{-1}C)\ar[r]&{0}}.$$%\text{Ext}^1_C(M\otimes E,C)}.$$ 
where the surjectivity of $\rho_*$ follows from part (a).
%But then by part (a) we have Ext$^1_C(M\otimes E,C)=0$, which shows that $\rho_{*}$ is surjective.  
Thus $\overline{\text{Hom}}_C(M\otimes E,\Omega^{-1}C)=0$, and this completes the proof of part (c).
\end{proof}   

Recall that a module $M$ is called rigid if $\textup{Ext}^1 (M,M)=0$.
\begin{cor}\label{4.7}\cite{SS}
If the global dimension of $C$ is at most two, then both $E\oplus C$ and $DE\oplus DC$ are rigid modules. % %Thus  $\textup{Ext}_C^1 (E\oplus C,E\oplus C)=0$ and $\textup{Ext}_C^1 (DE\oplus DC,DE\oplus DC)=0$.}
\end{cor}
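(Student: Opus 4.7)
The plan is to reduce the rigidity of each direct sum to four $\textup{Ext}^1$ computations, two of which are automatic and two of which are immediate specializations of Lemma~\ref{4.6}. Concretely, $\textup{Ext}^1_C(E\oplus C,\,E\oplus C)$ decomposes into the four groups $\textup{Ext}^1_C(X,Y)$ for $X,Y\in\{E,C\}$, and similarly $\textup{Ext}^1_C(DE\oplus DC,\,DE\oplus DC)$ splits into four groups.

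For $E\oplus C$, the two groups with $C$ in the first argument vanish immediately because $C$ is projective. For the remaining two, I would specialize Lemma~\ref{4.6} to $M=C$: using the canonical isomorphism $C\otimes_C E\cong E$, part~(a) of the lemma becomes $\textup{Ext}^1_C(E,C)=0$, and part~(c) becomes $\textup{Ext}^1_C(E,E)=0$. Combined with the two trivial vanishings, this gives the rigidity of $E\oplus C$.

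The argument for $DE\oplus DC$ is completely parallel. The two groups with $DC$ in the second argument vanish because $DC$ is injective, and the remaining two follow by specializing Lemma~\ref{4.6} to $M=DC$. The key identification is $D(E\otimes_C D(DC))\cong D(E\otimes_C C)\cong DE$, after which part~(b) yields $\textup{Ext}^1_C(DC,DE)=0$ and part~(d) yields $\textup{Ext}^1_C(DE,DE)=0$.

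There is essentially no serious obstacle in this corollary; it is a short organizational step that unpacks the four Ext-vanishing statements of Lemma~\ref{4.6} under the two natural specializations $M=C$ and $M=DC$, combined with the standard vanishing of $\textup{Ext}^1$ into injectives and out of projectives. The only care required is bookkeeping when applying $D$, namely to use $D(DC)\cong C$ correctly as a left $C$-module so that $E\otimes_C D(DC)\cong E$ and the second half of the argument parallels the first.
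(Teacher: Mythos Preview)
Your proposal is correct and is precisely the intended argument: the paper does not give its own proof of this corollary (it is cited from \cite{SS}), but its placement immediately after Lemma~\ref{4.6} signals that it is obtained exactly as you describe, by specializing $M=C$ for parts (a) and (c) and $M=DC$ for parts (b) and (d), together with the trivial vanishing of $\textup{Ext}^1$ out of projectives and into injectives.
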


The next result only holds for tilted algebras. 

\begin{lem} \label{4.4}\cite{SS}
Let $C$ be a tilted algebra. Then for all {\upshape{$M\in \text{mod} \,C$}} \\
\indent {\upshape{(a)}}  {\upshape{id}}$_C M\otimes E \leq 1$.\\
\indent {\upshape{(b)}}  {\upshape{pd}}$_C D(E\otimes DM) \leq 1$.
\end{lem}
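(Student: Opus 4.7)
The plan is to use Proposition \ref{4.1}(c)--(d) to translate the statements into the language of $\tau$ and $\Omega$. Part (a) becomes $\text{id}_C\,\tau^{-1}\Omega^{-1}M \leq 1$, and part (b) becomes $\text{pd}_C\,\tau\Omega M \leq 1$. Since $C$ is tilted, its global dimension is at most $2$, so $\Omega^{-1}M$ automatically has $\text{id}_C\,\Omega^{-1}M \leq 1$ (its minimal injective copresentation is a resolution), and dually $\text{pd}_C\,\Omega M \leq 1$. What remains in each case is a \emph{preservation} statement: over a tilted algebra, applying $\tau^{-1}$ should send modules of injective dimension $\leq 1$ to modules of injective dimension $\leq 1$, and dually for $\tau$ and projective dimension.

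To establish these preservation properties, I would invoke the tilting presentation $C = \text{End}_A T$ with $A = kQ$ hereditary and $T$ a tilting $A$-module, and apply the Brenner--Butler theorem. This yields a torsion pair $(\mathcal{Y}(T),\mathcal{X}(T))$ in $\text{mod}\,C$ for which standard tilting theory provides $\mathcal{Y}(T)\subseteq\{Y:\text{pd}_C\,Y\leq 1\}$ and $\mathcal{X}(T)\subseteq\{X:\text{id}_C\,X\leq 1\}$. The claim then reduces to showing $\tau^{-1}\Omega^{-1}M \in \mathcal{X}(T)$ and $\tau\Omega M \in \mathcal{Y}(T)$. Using $\mathcal{X}(T) = \{X : X\otimes_C T = 0\}$ together with the derived equivalence $-\otimes^L_C T\colon \mathcal{D}^b(\text{mod}\,C)\xrightarrow{\sim}\mathcal{D}^b(\text{mod}\,A)$, I would translate the vanishing condition into a statement about cohomology of a complex in $\mathcal{D}^b(\text{mod}\,A)$ and verify it using the hereditary structure of $A$, where both $\Omega^{\pm 1}$ and $\tau^{\pm 1}$ have transparent derived-category descriptions.

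An alternative, more direct, approach uses the Auslander--Reiten formula. Since $\text{gl.dim}\,C\leq 2$, one has $\text{id}_C\,N\leq 1$ iff $\text{Ext}^2_C(S,N)=0$ for every simple $C$-module $S$. Rewriting $\text{Ext}^2_C(S,N)\cong\text{Ext}^1_C(\Omega S,N)$ and applying AR-duality yields
\[
\text{Ext}^2_C(S,\tau^{-1}\Omega^{-1}M)\;\cong\;D\,\overline{\text{Hom}}_C(\tau^{-1}\Omega^{-1}M,\tau\Omega S),
\]
so it suffices to show every morphism $\tau^{-1}\Omega^{-1}M\to\tau\Omega S$ factors through an injective $C$-module, which one argues from the tilted structure (source and target lie on opposite sides of the tilting slice). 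Part (b) is handled symmetrically, with $\underline{\text{Hom}}$ replacing $\overline{\text{Hom}}$.

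The main obstacle is precisely the preservation of the dimension bound under $\tau^{\pm 1}$. This is where the tilted hypothesis is essential: for a general algebra of global dimension $2$, $\tau^{-1}$ need not preserve $\text{id}\leq 1$. Both approaches above reduce the problem to this single question, and making it rigorous---either by locating $\tau^{-1}\Omega^{-1}M$ inside $\mathcal{X}(T)$ or by establishing the required factorization through an injective---is the technical heart of the proof.
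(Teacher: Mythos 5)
The paper itself does not prove this lemma; it is imported from \cite{SS}, so there is no internal proof to compare against. Judged on its own merits, your proposal correctly reduces both parts, via Proposition \ref{4.1}, to showing $\textup{id}_C\,\tau^{-1}\Omega^{-1}M\leq 1$ and $\textup{pd}_C\,\tau\Omega M\leq 1$, and it correctly assembles the relevant tilting-theoretic background (the splitting torsion pair $(\mathcal{X}(T),\mathcal{Y}(T))$ with $\textup{pd}_C Y\leq 1$ on $\mathcal{Y}(T)$ and $\textup{id}_C X\leq 1$ on $\mathcal{X}(T)$). But the decisive step --- why $\tau^{-1}\Omega^{-1}M$ actually lies in $\operatorname{add}\mathcal{X}(T)$ --- is never carried out: both of your routes terminate in ``verify it using the hereditary structure of $A$'' and ``which one argues from the tilted structure,'' and your closing paragraph concedes that this is precisely the unproved technical heart. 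As written, this is a reduction of the problem, not a proof.

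Moreover, the question you reduce to (does $\tau^{-1}$ send modules of injective dimension $\leq 1$ to modules of injective dimension $\leq 1$?) is not the right one, and I see no reason it should hold in that generality even over a tilted algebra. What makes the lemma work is the specific shape of $\Omega^{-1}M$. If $N$ is a nonzero indecomposable summand of $\tau^{-1}\Omega^{-1}M$, then $N$ is non-projective and $\tau N$ is an indecomposable non-injective summand of $\Omega^{-1}M=I^0_C/M$, hence a nonzero quotient of the injective module $I^0_C$; therefore $\textup{Hom}_C(DC,\tau N)\neq 0$, which by the standard criterion ($\textup{pd}_C N\leq 1$ if and only if $\textup{Hom}_C(DC,\tau N)=0$) forces $\textup{pd}_C N=2$. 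Since every module in $\mathcal{Y}(T)$ has projective dimension at most $1$ and the torsion pair splits, $N$ must lie in $\mathcal{X}(T)$ and hence $\textup{id}_C N\leq 1$; part (b) is dual. In other words, the missing ingredient is a projective-dimension computation for the summands of $\tau^{-1}\Omega^{-1}M$, not a general preservation property of $\tau^{\pm 1}$. (A smaller slip: the Auslander--Reiten formula gives $\textup{Ext}^1_C(\Omega S,N)\cong D\underline{\textup{Hom}}_C(N,\tau\Omega S)\cong D\overline{\textup{Hom}}_C(\tau^{-1}N,\Omega S)$, not $D\overline{\textup{Hom}}_C(N,\tau\Omega S)$, so in your second approach the required factorization is through a projective, not an injective.)
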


\section{Main results}\label{sect 5}

In this section, we construct an explicit injective presentation of an arbitrary induced   module $M\otimes B$ in a cluster-tilted algebra $B=C\ltimes E$ using only induction and coinduction functors applied to modules over a tilted algebra $C$.  This presentation is described completely in terms of $C$-modules. 
In particular, our method provides an injective {\em resolution} for any projective $B$-module.
  We start by showing preliminary results that lead to the main theorem.  Most of these statements are also true when the global dimension of $C$ is at most 2 and we make that distinction clear.

\begin{lem}\label{5.1}
Let {\upshape{gl.dim}}$\,C = 2$ and $M\in$ {\upshape{mod}}$\,C$.  Suppose that {\upshape{id}}$_C M=2$, and that 

$$\begin{array}{cc}
\xymatrix{0\ar[r]&M\ar[r]^{i_0}&I_C^0\ar[r]^{i_2}&I_C^1\ar[r]^{i_3}&I_C^2\ar[r]&0}\end{array}$$
is a minimal injective resolution of $M$.  Then there is a commutative diagram with exact rows  

$$
\xymatrix {0\ar[r]&M\ar[r]^-{i_0}\ar[d]^-{g_1}&I_C^0\ar[r]^-{i_1}\ar[d]^{g_0}&\Omega^{-1}M\ar[r]\ar@{=}[d]&0\\
0\ar[r]&\tau\Omega\tau^{-1}\Omega^{-1}M\ar[r]^-{\pi_0}&\tilde{I}_C\ar[r]^-{\pi_1}&\Omega^{-1}M\ar[r]&0}
$$
where $\tilde{I}_C=\nu\nu^{-1}\Omega^{-1}M$ is an injective $C$-module.  
\end{lem}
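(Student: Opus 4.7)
The plan is to build the bottom row as the image under the Nakayama functor $\nu$ of a sequence coming from the minimal injective copresentation of $N := \Omega^{-1}M$, and then to construct the vertical maps as $\nu\nu^{-1}$ applied to $i_1$.

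Since $\mathrm{id}\,M = 2$, the sequence $0 \to N \to I_C^1 \to I_C^2 \to 0$ extracted from the resolution is a minimal injective copresentation of $N$. Applying the left exact functor $\nu^{-1}$ produces $0 \to \nu^{-1}N \to P^0 \to P^1 \to \tau^{-1}N \to 0$, where $P^i := \nu^{-1}I_C^{i+1}$ is projective. Because $\mathrm{gl.dim}\,C \le 2$, the second syzygy $\nu^{-1}N = \Omega^2\tau^{-1}N$ is itself projective, so $\tilde{I}_C = \nu\nu^{-1}N$ is injective as required. Setting $K = \Omega\tau^{-1}N = \mathrm{im}(P^0 \to P^1)$, I apply $\mathrm{Hom}(-,C)$ followed by $D$ to the short exact sequences $0\to \nu^{-1}N \to P^0 \to K \to 0$ and $0\to K \to P^1 \to \tau^{-1}N \to 0$. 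The first yields
\[0 \to D\mathrm{Ext}^1(K,C) \to \tilde{I}_C \to I_C^1 \to \nu K \to 0,\]
and the second gives $0 \to D\mathrm{Ext}^1(\tau^{-1}N,C) \to \nu K \to I_C^2$. By Proposition \ref{4.1}(c), $\tau^{-1}N = M\otimes E$, so Lemma \ref{4.6}(a) yields $\mathrm{Ext}^1(\tau^{-1}N,C) = 0$ and hence $\nu K \hookrightarrow I_C^2$. A kernel chase then identifies the image of $\tilde{I}_C \to I_C^1$ with $\ker(I_C^1 \to I_C^2) = N$, producing the surjection $\pi_1 : \tilde{I}_C \twoheadrightarrow N$ with $\ker\pi_1 = D\mathrm{Ext}^1(K,C)$.

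To identify this kernel as $\tau K = \tau\Omega\tau^{-1}N$, I compare $\mathrm{Ext}^1(K,C)$ with the transpose $\mathrm{Tr}\,K$ using their standard descriptions from a projective presentation. The natural inclusion $\mathrm{Ext}^1(K,C) \hookrightarrow \mathrm{Tr}\,K$ has cokernel embedding into $\mathrm{Hom}(\Omega^2 K, C)$; since $\mathrm{gl.dim}\,C \le 2$ forces $\Omega^2 K = \Omega^3\tau^{-1}N = 0$, this inclusion is an isomorphism, and so $D\mathrm{Ext}^1(K,C) \cong D\mathrm{Tr}\,K = \tau K$ (using that $K$ has no projective summands as a first syzygy from a minimal projective cover). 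With $\pi_0$ taken as the inclusion of $\tau K$ into $\tilde{I}_C$, this establishes the bottom short exact sequence.

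For the vertical maps, I set $g_0 := \nu\nu^{-1}(i_1) : I_C^0 \to \tilde{I}_C$, using the canonical identification $\nu\nu^{-1}I_C^0 \cong I_C^0$ valid because $I_C^0$ is injective. Commutativity of the right square follows from functoriality: applying $\nu\nu^{-1}$ to the factorisation of the resolution map $I_C^0 \to I_C^1$ as $i_1$ followed by $N \hookrightarrow I_C^1$ produces $g_0$ followed by $\tilde{I}_C \to I_C^1$, and this second map factors as $\tilde{I}_C \xrightarrow{\pi_1} N \hookrightarrow I_C^1$ by construction of $\pi_1$; cancelling the injection $N \hookrightarrow I_C^1$ forces $\pi_1 g_0 = i_1$. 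The map $g_1$ is then the co-restriction of $g_0 \circ i_0$ to $\ker\pi_1 = \tau\Omega\tau^{-1}N$, which is well-defined because $\pi_1 g_0 i_0 = i_1 i_0 = 0$, and the left square commutes tautologically. The trickiest step, and the one where $\mathrm{gl.dim}\,C \le 2$ and Lemma \ref{4.6}(a) together do the real work, is the identification $D\mathrm{Ext}^1(K,C) = \tau K$ and the exactness check that makes $\nu K$ embed into $I_C^2$.
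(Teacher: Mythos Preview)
Your argument is correct and follows the same overall strategy as the paper: apply $\nu^{-1}$ to the injective copresentation of $N=\Omega^{-1}M$, note that $\nu^{-1}N$ is projective because $\mathrm{gl.dim}\,C\le 2$, then apply $\nu$ back to obtain the bottom row; set $g_0=\nu\nu^{-1}i_1$, verify $\pi_1 g_0=i_1$, and produce $g_1$ by the universal property of the kernel.

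The only real difference is in how the bottom row is identified. The paper simply asserts $\nu\Omega\tau^{-1}\Omega^{-1}M=\nu\,\mathrm{Im}(\nu^{-1}i_3)=\mathrm{Im}(\nu\nu^{-1}i_3)=\mathrm{Im}\,i_3=I_C^2$ and reads off the sequence $0\to\tau K\to\tilde I_C\to I_C^1\to I_C^2\to 0$ directly. You instead split the computation into two short exact sequences, invoke Lemma~\ref{4.6}(a) (via $\tau^{-1}N\cong M\otimes E$) to get $\nu K\hookrightarrow I_C^2$, and then identify $D\mathrm{Ext}^1(K,C)$ with $\tau K$ through an explicit $\mathrm{Ext}^1$/transpose comparison using $\Omega^2K=0$. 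Your route is longer, but it actually supplies the justification behind the paper's middle equality $\nu\,\mathrm{Im}(\nu^{-1}i_3)=\mathrm{Im}(\nu\nu^{-1}i_3)$, which tacitly requires $\mathrm{Ext}^1_C(\tau^{-1}N,C)=0$ for the right-exact functor $\nu$ to preserve that monomorphism. One small point: the presentation $P^0\to P^1\to\tau^{-1}N\to 0$ coming from $\nu^{-1}$ need not be minimal, so $K$ may have projective summands; this is harmless since both $\tau$ and $\mathrm{Ext}^1(-,C)$ annihilate them, but your parenthetical ``$K$ has no projective summands'' is not quite justified and also not needed.
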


\begin{proof}
Consider the diagram below.  The injective resolution of $M$ also gives a minimal injective resolution of $\Omega^{-1}M$, which is shown in the top row of the diagram.  Then we apply the inverse Nakayama functor $\nu^{-1}=\text{Hom}_C(DC, - )$ to this resolution and obtain a projective presentation of $\tau^{-1}\Omega^{-1}M$ in the second row.  First, note that $\nu^{-1}\Omega^{-1}M=\text{Hom}_C(DC,\Omega^{-1}M)$ is nonzero, because it contains the nonzero map $i_1$.  Also, the global dimension of $C$ is two, which means that $\nu^{-1}\Omega^{-1}M$ is projective and we actually have a projective resolution of $\tau^{-1}\Omega^{-1}M$.

Next we apply the Nakayama functor $\nu=D\text{Hom}_C(-,C)$ to the projective resolution of $\Omega\tau^{-1}\Omega^{-1} M$, and obtain an injective presentation of $\tau\Omega\tau^{-1}\Omega^{-1}M$ in the third row of the diagram.  Observe that $\nu\Omega\tau^{-1}\Omega^{-1}M=\nu\text{Im}(\nu^{-1}i_3)=\text{Im}(\nu\nu^{-1}i_3)=\text{Im}\,i_3=I^2_C$.

Let $g_0=\nu\nu^{-1}i_1$.  By commutativity in the diagram below, we see that $i_4\pi_1g_0=i_2=i_4 i_1$.  Since $i_4$ is injective, we conclude $\pi_1 g_0=i_1$.  This shows that we have two short exact sequences as in the statement of the lemma and that the second square in the diagram is commutative.  Then by the universal property of ker$\,\pi_1$ there exists $g_1 \in \text{Hom}_C(M,\tau\Omega\tau^{-1}\Omega^{-1}M)$ that makes the first square commute.  

$$\xymatrix@C-=.3cm @R-=.5cm {&&&\Omega^{-1}M\ar[dr]^{i_4}&&&&&\\
0\ar[r]&M\ar[r]^{i_0}&I^0_C\ar[rr]^{i_2}\ar[ur]^{i_1}&&I^1_C\ar[rr]^{i_3}\ar@{.>}[dd]^{\nu^{-1}}&&I_C^2\ar[r]\ar@{.>}[dd]^{\nu^{-1}}&0&\\
&&&\nu^{-1}I_C^0\ar[dl]_{\nu^{-1}i_1}\ar[dr]^{\nu^{-1}i_2}&&&&\\
&0\ar[r]&\nu^{-1}\Omega^{-1}M\ar[rr]^{\nu^{-1}i_4}\ar@{.>}[ddd]^{\nu}&&\nu^{-1}I^1_C\ar[rr]^{\nu^{-1}i_3}\ar[dr]\ar@{.>}[ddd]^{\nu}&&\nu^{-1}I_C^2\ar[r]&\tau^{-1}\Omega^{-1}M\ar[r]&0\\
&&&&&\Omega\tau^{-1}\Omega^{-1}M\ar[ur]&&\\
&&&I^0_C\ar[dl]_{\nu\nu^{-1}i_1}\ar[dr]^{i_2}&&&&&\\
0\ar[r]&\tau\Omega\tau^{-1}\Omega^{-1}M\ar[r]^{\pi_0}&\nu\nu^{-1}\Omega^{-1}M\ar[rr]^{\nu\nu^{-1}i_4}\ar[dr]_{\pi_1}&&I^1_C\ar[rr]^{i_3}&&I_C^2\ar[r]&0\\
&&&\Omega^{-1}M\ar[ur]_{i_4}&&&&&}$$
\end{proof}

\begin{prop}\label{5.2}
Let {\upshape{gl.dim}}$\,C\leq 2$ and {\upshape{$E={\text{Ext}}^2_C(DC,C)$}}, then for every $M\in$ {\upshape{mod}}$\,C$
{\upshape{$$M\otimes E\cong {\text{Hom}}_C(E,M\otimes E)\otimes E.$$}}
\end{prop}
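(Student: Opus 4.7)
The plan is to exhibit both sides of the claim as isomorphic copies of the same object, namely $\mathrm{Tor}^C_1(\Omega^{-1}M, E)$, by applying the long exact $\mathrm{Tor}$ sequence to two short exact sequences that share $\Omega^{-1}M$ as their right-hand term.

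First I would reduce to the case $\mathrm{id}_C M = 2$: if $\mathrm{id}_C M \le 1$, then Proposition \ref{4.3}(a) together with the decomposition in Proposition \ref{3.6}(a) forces $M \otimes E = 0$, so both sides vanish. So assume $\mathrm{id}_C M = 2$. Next, I would rewrite the Hom on the right-hand side in terms of Auslander--Reiten translates. Standard hom--tensor adjunction over $k$ yields $\mathrm{Hom}_C(E, X) \cong D(E \otimes_C DX)$ for any $X \in \mathrm{mod}\,C$, and then Proposition \ref{4.1}(d) gives $\mathrm{Hom}_C(E, X) \cong \tau\Omega X$. Setting $X = M \otimes E \cong \tau^{-1}\Omega^{-1}M$ (Proposition \ref{4.1}(c)) produces $\mathrm{Hom}_C(E, M \otimes E) \cong \tau\Omega\tau^{-1}\Omega^{-1}M$, precisely the module appearing in the bottom row of Lemma \ref{5.1}.

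The two short exact sequences I would compare are the injective envelope
\[ 0 \to M \to I_C^0 \to \Omega^{-1}M \to 0 \]
and the sequence from Lemma \ref{5.1},
\[ 0 \to \tau\Omega\tau^{-1}\Omega^{-1}M \to \tilde{I}_C \to \Omega^{-1}M \to 0, \]
in which $\tilde{I}_C = \nu\nu^{-1}\Omega^{-1}M$ is injective. Tensoring each with $E$ over $C$ yields a long exact $\mathrm{Tor}$ sequence, which I intend to collapse using three vanishing inputs: (i) $I \otimes_C E = 0$ for every injective $I$, since $\Omega^{-1}I = 0$ together with Proposition \ref{4.1}(c); (ii) $\Omega^{-1}M \otimes_C E = 0$, because $\mathrm{id}(\Omega^{-1}M) = \mathrm{id}(M) - 1 = 1$ combined with Proposition \ref{4.3}(a); and (iii) $\mathrm{Tor}_1^C(I, E) = 0$ for every injective $I$. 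With these, the first sequence gives $\mathrm{Tor}_1^C(\Omega^{-1}M, E) \cong M \otimes E$ and the second gives $\mathrm{Tor}_1^C(\Omega^{-1}M, E) \cong \tau\Omega\tau^{-1}\Omega^{-1}M \otimes E \cong \mathrm{Hom}_C(E, M \otimes E) \otimes E$, and composing these yields the required isomorphism.

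The main obstacle is step (iii). For this I would use the Ext--Tor duality over $k$, $\mathrm{Tor}_1^C(I, E) \cong D\,\mathrm{Ext}_C^1(I, DE)$, and observe that every injective $I$ is a direct summand of some power of the cogenerator $DC$; this reduces the claim to $\mathrm{Ext}^1_C(DC, DE) = 0$, which is immediate from Corollary \ref{4.7} (the rigidity of $DE \oplus DC$). The rest of the argument is routine bookkeeping with the long exact $\mathrm{Tor}$ sequence and the identifications established above.
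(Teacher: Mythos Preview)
Your proof is correct but follows a genuinely different route from the paper's. Both arguments share the initial reductions: dispatching the case $\textup{id}_C M\le 1$, rewriting $\text{Hom}_C(E,M\otimes E)$ as $\tau\Omega\tau^{-1}\Omega^{-1}M$ via Lemma~\ref{*} and Proposition~\ref{4.1}, and invoking the short exact sequence $0\to\tau\Omega\tau^{-1}\Omega^{-1}M\to\tilde I_C\to\Omega^{-1}M\to 0$ from Lemma~\ref{5.1}. From there the paper works directly at the level of modules: it compares this sequence to the injective envelope of $\tau\Omega\tau^{-1}\Omega^{-1}M$, splits $\tilde I_C$ along the image of that envelope, and concludes that $\Omega^{-1}M$ differs from $\Omega^{-1}\tau\Omega\tau^{-1}\Omega^{-1}M$ only by an injective summand, whence $\tau^{-1}$ of the two coincide. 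Your approach is homological rather than combinatorial: you tensor both this sequence and the injective envelope $0\to M\to I^0_C\to\Omega^{-1}M\to 0$ with $E$, and use the vanishing of $I\otimes E$ and of $\text{Tor}_1^C(I,E)$ for injective $I$ (the latter via Ext--Tor duality and the rigidity statement Corollary~\ref{4.7}) to identify both $M\otimes E$ and $\tau\Omega\tau^{-1}\Omega^{-1}M\otimes E$ with the common invariant $\text{Tor}_1^C(\Omega^{-1}M,E)$. The paper's argument is more elementary and entirely self-contained within the module category, never needing Corollary~\ref{4.7}; your argument is cleaner conceptually, pinpointing a single Tor group that both sides compute, and it avoids the somewhat ad~hoc splitting step. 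A minor remark: your input~(ii), that $\Omega^{-1}M\otimes E=0$, is not actually needed once you have~(i), since $I^0_C\otimes E=0$ already collapses the relevant part of the Tor sequence.
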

\begin{proof}
Observe that by Proposition \ref{4.1}(c) the left hand side of the statement above is isomorphic to $\tau^{-1}\Omega^{-1}M$.  Now consider the right hand side 
\begin{align*} \text{Hom}_C(E, M\otimes E)\otimes E  &\cong D(E\otimes D(M\otimes E))\otimes E &\text{by}&\;\text{Lemma \ref{*}} \\  & \cong \tau\Omega(M\otimes E)\otimes E & \text{by}& \;\text{Proposition \ref{4.1}(d)}\\& \cong \tau\Omega\tau^{-1}\Omega^{-1}M\otimes E &\text{by}&\;\text{Proposition \ref{4.1}(c)} \\& \cong \tau^{-1}\Omega^{-1}\tau\Omega\tau^{-1}\Omega^{-1}M &\text{by}&\;\text{Proposition \ref{4.1}(c)}.
\end{align*} 
Therefore, it suffices to show that $\tau^{-1}\Omega^{-1}M\cong \tau^{-1}\Omega^{-1}\tau\Omega\tau^{-1}\Omega^{-1}M$.  If id$_C M\leq 1$ then both sides are zero and the statement follows.   If id$_C M = 2 $, then by Lemma \ref{5.1} we have a short exact sequence 
$$\xymatrix{0\ar[r]&\tau\Omega\tau^{-1}\Omega^{-1}M\ar[r]^-{\pi_0}&\tilde{I}_C\ar[r]&\Omega^{-1}M\ar[r]&0}.$$
Next we construct the following commutative diagram
$$\xymatrix{0\ar[r]&\tau\Omega\tau^{-1}\Omega^{-1}M\ar[r]\ar@{=}[d]& I\ar[r]\ar[d]^{i}&\Omega^{-1}\tau\Omega\tau^{-1}\Omega^{-1}M\ar[r]\ar@{.>}[d]^{j}&0\\
0\ar[r]&\tau\Omega\tau^{-1}\Omega^{-1}M\ar[r]^-{\pi_0}&\tilde{I}_C\ar[r]&\Omega^{-1}M\ar[r]&0}$$
where $I$ is an injective envelope of $\tau\Omega\tau^{-1}\Omega^{-1}M$.   The map $i$ exists and is a monomorphism by the properties of an injective envelope; $j$ exists by the universal property of the cokernel.   Moreover, $\tilde{I}_C\cong i(I)\oplus I'$, where $I'$ is some injective $C$-module.  By commutativity Im$\,\pi_0 \subset i(I)$, which implies that $\Omega^{-1}M\cong I'\oplus \Omega^{-1}\tau\Omega\tau^{-1}\Omega^{-1}M$.  Hence, $\tau^{-1}\Omega^{-1}M\cong \tau^{-1}\Omega^{-1}\tau\Omega\tau^{-1}\Omega^{-1}M$.
\end{proof}

To simplify the notation we will write $I_C$ for an injective $C$-module, and $I_B$ for the corresponding injective $B$-module, that is $I_B=D(B\otimes D I_C)$.  Also, the morphisms in the statements of the proceeding lemmas will be used in the proof of the main theorem, so we keep the notation consistent throughout this section.

\begin{lem}\label{5.3}
Suppose $M$ is a module over a tilted algebra $C$ such that {\upshape{id}}$_C M=2$ and $\textup{pd}_C M\leq 1$.  Let $0\rightarrow M\xrightarrow{i_0} I^0_C$ be an injective envelope of $M$.   Then there is a commutative diagram with exact rows
$$\xymatrix@C=2cm {0\ar[r]&M\ar[r]^-{\bigl(\begin{smallmatrix}
j_0i_0\\ g_1\end{smallmatrix} \bigr)}\ar[d]^{i_0}&I^0_B\oplus \tau\Omega\tau^{-1}\Omega^{-1}M\ar[r]^-{\bigl(\begin{smallmatrix}
\alpha,&-l_1\pi_0
\end{smallmatrix} \bigr)}\ar[d]^{\bigl(\begin{smallmatrix}
1&0\\0&\pi_0
\end{smallmatrix} \bigr)}&\tilde{I}_B\ar[r]^{\eta} \ar@{=}[d]&\textup{Ext}^1_C(E,M)\ar@{=}[d]\\
0\ar[r]&I^0_C\ar[r]^-{\bigl(\begin{smallmatrix}
j_0\\ g_0\end{smallmatrix} \bigr)}&I^0_B\oplus \tilde{I}_C\ar[r]^-{\bigl(\begin{smallmatrix}
\alpha,&-l_1
\end{smallmatrix} \bigr)}&\tilde{I}_B\ar[r]^{\eta}&\textup{Ext}^1_C(E,M)}$$
where $\tilde{I}_C=\nu\nu^{-1}\Omega^{-1}M$ is the injective $C$-module of Lemma \ref{5.1}. 
\end{lem}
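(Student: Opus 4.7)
The plan is to build the bottom row of the diagram by applying the coinduction functor $(-)_B = D(B\otimes D-)$ to the $C$-module morphism $g_0\colon I^0_C\to\tilde{I}_C$ of Lemma \ref{5.1}, and then obtain the top row by restricting along the monomorphisms $i_0$ and $\pi_0$ also supplied by Lemma \ref{5.1}.

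For the bottom row, Proposition \ref{3.6}(b) furnishes canonical inclusions $j_0\colon I^0_C\hookrightarrow I^0_B$ and $l_1\colon\tilde{I}_C\hookrightarrow\tilde{I}_B$, whose respective cokernels are $\tau\Omega I^0_C$ and $\tau\Omega\tilde{I}_C$ by Proposition \ref{4.1}(d). Functoriality of coinduction applied to $g_0$ produces $\alpha\colon I^0_B\to\tilde{I}_B$ with $\alpha j_0=l_1 g_0$, so that $0\to I^0_C\to I^0_B\oplus\tilde{I}_C\to\tilde{I}_B$ with the indicated maps is a complex whose first morphism is clearly injective. A $3\times 3$ diagram chase comparing the two coinduction short exact sequences along $g_0$, $\alpha$, and the induced map $\tau\Omega g_0\colon\tau\Omega I^0_C\to\tau\Omega\tilde{I}_C$ establishes exactness at $I^0_B\oplus\tilde{I}_C$ and identifies $\textup{coker}(\alpha,-l_1)$ with $\textup{coker}(\tau\Omega g_0)$.

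To define $\eta\colon\tilde{I}_B\to\textup{Ext}^1_C(E,M)$ with kernel $\textup{im}(\alpha,-l_1)$, I apply the left exact functor $\textup{Hom}_C(E,-)\cong\tau\Omega(-)$ (via Lemma \ref{*} and Proposition \ref{4.1}(d)) to the short exact sequence $0\to M\xrightarrow{i_0}I^0_C\xrightarrow{i_1}\Omega^{-1}M\to 0$. Since $I^0_C$ is injective, this yields the exact sequence $0\to\tau\Omega M\to\tau\Omega I^0_C\xrightarrow{\tau\Omega i_1}\tau\Omega\Omega^{-1}M\to\textup{Ext}^1_C(E,M)\to 0$. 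Using the factorization $i_1=\pi_1 g_0$ of Lemma \ref{5.1} and applying $\tau\Omega$ to the lower row of that lemma, one obtains a parallel exact sequence whose comparison identifies $\textup{coker}(\tau\Omega g_0)$ with a submodule of $\textup{Ext}^1_C(E,M)$. Composing the projection $\tilde{I}_B\twoheadrightarrow\tau\Omega\tilde{I}_C$ with this embedding produces $\eta$ and completes the exactness of the bottom row.

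Finally, the top row is obtained by restricting the bottom row along the monomorphisms $i_0\colon M\hookrightarrow I^0_C$ and $\pi_0\colon\tau\Omega\tau^{-1}\Omega^{-1}M\hookrightarrow\tilde{I}_C$. The identity $g_0 i_0=\pi_0 g_1$ from Lemma \ref{5.1} guarantees that the restricted composition factors through the subobject $\tau\Omega\tau^{-1}\Omega^{-1}M$ via $g_1$ and supplies the commutativity of the first two squares; the rightmost square commutes trivially. Exactness of the top row is inherited directly from that of the bottom row. The main technical hurdle will be the identification of $\textup{coker}(\tau\Omega g_0)$ as a submodule of $\textup{Ext}^1_C(E,M)$, which requires careful bookkeeping in the long exact $\textup{Ext}$-sequences arising from the two rows of Lemma \ref{5.1} and relies on the injectivity of $\tilde{I}_C$.
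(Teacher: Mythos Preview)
Your plan has a genuine gap at the step where you claim a ``$3\times 3$ diagram chase'' establishes exactness of
\[
0\longrightarrow I^0_C \xrightarrow{\binom{j_0}{g_0}} I^0_B\oplus\tilde I_C \xrightarrow{(\alpha,\,-l_1)} \tilde I_B.
\]
In fact, comparing the two coinduction short exact sequences along $g_0$, $\alpha$, $\tau\Omega g_0$ shows that the homology of this complex at the middle term is precisely $\ker(\tau\Omega g_0)$, not zero: an element $(a,b)$ with $\alpha(a)=l_1(b)$ satisfies $\bar g\,p(a)=0$ (where $p\colon I^0_B\twoheadrightarrow\tau\Omega I^0_C$ and $\bar g=\tau\Omega g_0$), but it lies in the image of $\binom{j_0}{g_0}$ only when $p(a)=0$. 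You never verify that $\ker(\tau\Omega g_0)=0$, and nothing you cite (in particular, the injectivity of $\tilde I_C$) forces this. Relatedly, you never invoke the hypothesis $\textup{pd}_C M\le 1$, nor the tilted assumption in any essential way; these are exactly what the argument needs.

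The paper proceeds in the opposite order and thereby avoids this obstacle. It applies the coinduction functor $\textup{Hom}_C(B,-)$ to the \emph{entire} short exact sequence $0\to M\to I^0_C\oplus\tau\Omega\tau^{-1}\Omega^{-1}M\to\tilde I_C\to0$ from Lemma~\ref{5.1}. Because $\textup{pd}_C M\le 1$ (Proposition~\ref{4.3}(b)) and $\textup{pd}_C\tau\Omega\tau^{-1}\Omega^{-1}M\le 1$ (tilted hypothesis via Lemma~\ref{4.4}(b)), both $M$ and $\tau\Omega\tau^{-1}\Omega^{-1}M$ are fixed by coinduction, so the long exact sequence immediately \emph{is} the top row of the lemma, with $\eta$ arising as the connecting map into $\textup{Ext}^1_C(B,M)\cong\textup{Ext}^1_C(E,M)$. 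The bottom row is then derived from the top one by a diagram chase using Lemma~\ref{5.1}. In short: coinduce the whole sequence rather than just $g_0$, and the exactness and the map $\eta$ come for free.
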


\begin{proof}
We begin by constructing the following diagram (\ref{d3}) with commutative squares and exact rows.  The exactness of its top row follows from  Lemma \ref{5.1}.

\begin{equation}\label{d3} \xymatrix@C=2cm{0\ar[r]&M\ar@{=}[d]\ar[r]^-{\bigl(\begin{smallmatrix}
i_0\\ g_1\end{smallmatrix} \bigr)}&I^0_C\oplus \tau\Omega\tau^{-1}\Omega^{-1}M\ar[r]^-{\bigl(\begin{smallmatrix}
g_0,&-\pi_0\end{smallmatrix} \bigr)}\ar[d]^-{\bigl(\begin{smallmatrix}
j_0& 0 \\ 0 & 1\end{smallmatrix} \bigr)}&
\tilde{I}_C\ar[r]\ar[d]^-{l_1}&0\ar[d]\\
0\ar[r]&M\ar[r]^-{\bigl(\begin{smallmatrix}
i_0j_0\\ g_1\end{smallmatrix} \bigr)}&I^0_B\oplus \tau\Omega\tau^{-1}\Omega^{-1}M\ar[r]^-{\bigl(\begin{smallmatrix}
\alpha,&-l_1\pi_0
\end{smallmatrix} \bigr)}&\tilde{I}_B\ar[r]^{\eta}&\text{Ext}^1_C(E,M)}\end{equation}

We apply the coinduction functor Hom$_C(B,-)$ to the top sequence in diagram (\ref{d3}) to obtain the bottom sequence and the corresponding inclusion maps between the two sequences.  First observe that using Proposition \ref{4.1}(d) we have $\tau\Omega\tau^{-1}\Omega^{-1}M \cong D(E\otimes D\tau^{-1}\Omega^{-1} M)$.   Since $C$ is tilted, Lemma \ref{4.4}(b) shows that the projective dimension of $\tau\Omega\tau^{-1}\Omega^{-1}M$ is strictly less than 2, and Proposition \ref{4.3}(b) implies that both $M$ and $\tau\Omega\tau^{-1}\Omega^{-1} M$  do not change under coinduction.  Also, there is a $C$-module isomorphism Ext$^1_C(B,M)\cong \text{Ext}^1_C(C\oplus E,M)=\text{Ext}^1_C(E,M)$.  

Hence, coinducing the top sequence in diagram (\ref{d3}) we obtain the long exact sequence underneath.  Here, $\alpha$ is the coinduced morphism from $g_0$, and $l_1,\,j_0$ are the inclusions as in Proposition~\ref{3.6}(b), such that there is a commutative diagram
 \begin{equation}\label{4}  \xymatrix{0\ar[r]&I^0_C\ar[r]^{j_0}\ar[d]^{g_0}&I^0_B\ar[d]^{\alpha}\\
0\ar[r]&\tilde{I}_C\ar[r]^{l_1}&\tilde{I}_B.} \end{equation}

Moreover, it is easy to see that diagram (\ref{d3}) is commutative.  Indeed, $\eta l_1 = 0$ because $l_1$ factors through the kernel of $\eta$.  Finally, observe that the bottom exact sequence in this diagram coincides with the top exact sequence as in the statement of the lemma.   Therefore, it suffices to show that the bottom row depicted in the lemma is exact and that the diagram is commutative.  

First observe that the second square and the third square in the diagram of the lemma commute trivially while the first one commutes because $\pi_0 g_1=g_0i_0$, by Lemma \ref{5.1}. Next, note that the bottom row is exact at $I^0_C$, because $j_0$ is injective, and we want to show that it is also exact at $\tilde{I}_B$.   That is, we claim that $\text{Im}\,(\alpha, -l_1)=\text{ker}\,\eta$.  Consider $\eta (\alpha, -l_1) = (\eta \alpha, -\eta l_1)$.  The first entry is zero because the top row is exact at $\tilde{I}_B$, and the second entry is zero by commutativity of diagram (\ref{d3}).  Hence, $\text{Im}\,(\alpha, -l_1)\subset \text{ker}\,\eta$.  To show the reverse inclusion, suppose $\eta (e) =0$ for some $e\in \tilde{I}_B$.  Since, the top row is exact at $\tilde{I}_B$ there exist $a\in I^0_B$ and $c\in \tau \Omega \tau^{-1}\Omega^{-1} M$ such that $\alpha(a)-l_1(\pi_0(c))=e$.  This shows that $\text{ker}\,\eta \subset \text{Im}\,(\alpha, -l_1)$, so the bottom sequence in the statement of the lemma is exact at $\tilde{I}_B$.  

Thus it remains to show that Im$\bigl(\begin{smallmatrix} j_0\\g_0\end{smallmatrix}\bigr)$=ker$\bigl(\begin{smallmatrix} \alpha,&-l_1\end{smallmatrix}\bigr)$. To show the forward inclusion consider  $\bigl(\begin{smallmatrix} \alpha,&-l_1\end{smallmatrix}\bigr)$$\bigl(\begin{smallmatrix} j_0\\g_0\end{smallmatrix}\bigr)$=$\alpha j_0-l_1g_0$ which is zero by diagram (\ref{4}). Now suppose $\alpha (a)-l_1(b)=0$ for some $a\in I^0_B$ and $b\in \tilde{I}_C$.  By Lemma \ref{5.1}, there exists $d\in I^0_C$ such that  $i_1(d)=\pi_1(b)$.  Then $\pi_1 g_0 (d)=i_1(d)=\pi_1(b)$ or equivalently $\pi_1(b-g_0(d))=0$. Again Lemma \ref{5.1} implies that there exists $c\in\tau\Omega\tau^{-1}\Omega^{-1} M$ such that $\pi_0(c)=b-g_0(d)$.  Now we have $\alpha(a)=l_1(b)=l_1(\pi_0(c)+g_0(d))=l_1\pi_0(c)+\alpha j_0(d)$, where the last identity follows from diagram (\ref{4}),  so $\alpha(a-j_0(d))-l_1\pi_0(c)=0$.   Using the fact that the top row in our diagram is exact we can find $p\in M$ such that $j_0i_0(p)=a-j_0(d)$ and $g_1(p)=c$.  Finally, $i_0(p)+d\in I^0_C$ and $\bigl(\begin{smallmatrix} j_0\\g_0\end{smallmatrix} \bigr) (i_0(p)+d)=\bigl(\begin{smallmatrix} a\\ b\end{smallmatrix}\bigr)$ since $g_0i_0=\pi_0 g_1$.  This shows the reverse inclusion and completes the proof of the lemma.  
\end{proof}

\begin{lem}\label{5.4}
Let {\upshape{gl.dim}}$\,C = 2$.  Suppose $M$ is a $C$-module with an injective envelope $0\rightarrow M \xrightarrow{i_0} I^0_C$ and {\upshape{id}}$_C M=2$.  Then there exists a commutative diagram with exact rows 
$$\xymatrix@C=1.8cm {0\ar[r]&M\otimes B \ar[r]^-{\bigl(\begin{smallmatrix} u_1\\g_2\end{smallmatrix} \bigr)}\ar@{=}[d]&M \oplus \tau\Omega\tau^{-1}\Omega^{-1} M \otimes B \ar[r]^-{\bigl(\begin{smallmatrix} g_1, & -\delta_1\end{smallmatrix} \bigr)}\ar[d]^{\bigl(\begin{smallmatrix} i_0&0\\0&1\end{smallmatrix}\bigr)}&\tau\Omega\tau^{-1}\Omega^{-1}M \ar[r]\ar[d]^{\pi_0}&0\\
0\ar[r]&M\otimes B \ar[r]^-{\bigl(\begin{smallmatrix} i_0u_1\\g_2\end{smallmatrix} \bigr)}&I^0_C\oplus \tau\Omega\tau^{-1}\Omega^{-1}M\otimes B\ar[r]^-{\bigl(\begin{smallmatrix} g_0,&-\pi_0\delta _1\end{smallmatrix}\bigr)}&\tilde{I}_C\ar[r]&0}
$$
where $\tilde{I}_C=\nu\nu^{-1}\Omega^{-1} M$ is the injective $C$-module of Lemma \ref{5.1}. 
\end{lem}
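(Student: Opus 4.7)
The plan is to construct the top row by a pullback argument and then derive the bottom row by post-composing with the commutative square $\pi_0 g_1 = g_0 i_0$ provided by Lemma \ref{5.1}. Set $N:=\tau\Omega\tau^{-1}\Omega^{-1}M$ and let $g_2 := g_1 \otimes 1_B$. Applying Proposition \ref{3.6}(a) to both $M$ and $N$ yields
\[
0\to M\otimes E\to M\otimes B\xrightarrow{u_1}M\to 0,\qquad 0\to N\otimes E\to N\otimes B\xrightarrow{\delta_1}N\to 0,
\]
and naturality of induction gives $\delta_1 g_2 = g_1 u_1$. Hence $(u_1,g_2)^T$ factors through the pullback $P$ of $g_1$ and $\delta_1$, and since $\delta_1$ is surjective one automatically obtains a short exact sequence $0\to P \to M \oplus (N\otimes B) \xrightarrow{(g_1,-\delta_1)} N \to 0$. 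Applying the five-lemma to the comparison of the $M\otimes B$-sequence above with the pullback sequence $0\to N\otimes E \to P \to M \to 0$, the canonical map $M\otimes B \to P$ is an isomorphism if and only if the induced map on kernels $g_1\otimes 1_E: M\otimes E \to N\otimes E$ is an isomorphism; this reduces the exactness of the top row to that single claim.

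The key step, which is the technical heart of the proof, is to verify that $g_1\otimes 1_E$ is indeed an isomorphism. Proposition \ref{5.2} identifies the two modules abstractly; to see that the specific map induced by $g_1$ realizes this identification, I would invoke the splitting $\tilde{I}_C \cong I\oplus I'$ (with $I$ the injective envelope of $N$ and $I'$ an injective complement) together with the resulting decomposition $\Omega^{-1}M \cong \Omega^{-1}N \oplus I'$ from the proof of Proposition \ref{5.2}. Applying $-\otimes E = \tau^{-1}\Omega^{-1}$ annihilates the injective summand $I'$, and the defining equation $\pi_0 g_1 = g_0 i_0$ from Lemma \ref{5.1} then forces $g_1\otimes 1_E$ to coincide with the canonical isomorphism $\tau^{-1}\Omega^{-1}M \cong \tau^{-1}\Omega^{-1}N$. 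This identification is the main obstacle.

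Once the top row is in place, the bottom row is obtained by post-composing the middle column with $\left(\begin{smallmatrix} i_0 & 0 \\ 0 & 1 \end{smallmatrix}\right)$ and the right column with $\pi_0$. The left square commutes trivially and the right square commutes by $g_0 i_0 = \pi_0 g_1$. For exactness of the bottom row: injectivity at $M\otimes B$ is immediate from the top row together with the injectivity of $i_0$; surjectivity onto $\tilde{I}_C$ uses that $\pi_1 g_0 = i_1$ is surjective, so the image of $g_0$ maps onto a complement of $\operatorname{Im}\pi_0 = \ker\pi_1$ in $\tilde{I}_C$; and exactness in the middle is a short diagram chase on elements $(a,y) \in I^0_C \oplus (N\otimes B)$ with $g_0(a) = \pi_0\delta_1(y)$, using that $\pi_1 g_0(a) = 0$ forces $a \in \ker i_1 = \operatorname{Im} i_0$ and that $\pi_0$ is injective.
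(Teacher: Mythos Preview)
Your overall architecture is the reverse of the paper's: you build the top row first via a pullback and then push it down to the bottom row, whereas the paper obtains the bottom row directly and then lifts to the top. Your diagram chases for the bottom row (injectivity, surjectivity onto $\tilde I_C$, exactness in the middle) are correct once the top row is in hand. The problem is the step you yourself flag as ``the main obstacle'': the claim that the specific map $g_1\otimes 1_E$ is an isomorphism.

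Your justification for that step does not go through. Tensoring the relation $\pi_0 g_1 = g_0 i_0$ with $E$ gives no information, because $I^0_C\otimes E = \tilde I_C\otimes E = 0$ (both modules are injective, so Proposition~\ref{4.3}(a) applies); you are left with $0=0$. Likewise, the decomposition $\Omega^{-1}M\cong \Omega^{-1}N\oplus I'$ from the proof of Proposition~\ref{5.2} is produced by a map $j:\Omega^{-1}N\to\Omega^{-1}M$ that has no established relation to $g_1$, and the identification $-\otimes E \cong \tau^{-1}\Omega^{-1}$ in Proposition~\ref{4.1}(c) is stated only objectwise, so writing $g_1\otimes 1_E = \tau^{-1}\Omega^{-1}g_1$ and reading the answer off the decomposition is not justified. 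The fact that $g_1\otimes 1_E$ is an isomorphism is true (the paper later proves it as the map $\epsilon_0$ in the proof of Theorem~\ref{5.7}), but there it is deduced \emph{from} Lemma~\ref{5.4}, so you cannot invoke it here.

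The paper avoids this difficulty entirely by reversing the order. It applies the induction functor $-\otimes B = D\mathrm{Hom}_C(-,DB)$ to the short exact sequence $0\to M\to I^0_C\oplus N\to \tilde I_C\to 0$ coming from Lemma~\ref{5.1}. The injective terms are fixed by induction (Proposition~\ref{4.3}(a)), and the only thing needed for exactness of the induced sequence is $\mathrm{Ext}^1_C(\tilde I_C,DB)=\mathrm{Ext}^1_C(\tilde I_C,DC\oplus DE)=0$, which follows from Corollary~\ref{4.7}. This produces the bottom row immediately, with $g_2$ the induced map from $g_1$ and $u_1,\delta_1$ the canonical projections of Proposition~\ref{3.6}(a). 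The top row is then verified by a short element chase using the bottom row and the injectivity of $i_0$. In other words, the Ext-vanishing of Corollary~\ref{4.7} is the key input that replaces your hard isomorphism claim.
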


\begin{proof}
By Lemma \ref{5.1} there is a short exact sequence 
$$\xymatrix@C=2cm{0\ar[r]&M \ar[r]^-{\bigl(\begin{smallmatrix}
i_0\\ g_1\end{smallmatrix} \bigr)}&I^0_C\oplus \tau\Omega\tau^{-1}\Omega^{-1}M \ar[r]^-{\bigl(\begin{smallmatrix}
g_0,&-\pi_0
\end{smallmatrix} \bigr)}&\tilde{I}_C\ar[r]&0.}$$

We will apply the induction functor $-\otimes B$ to this sequence. Since $I^0_C$ and $\tilde{I}_C$ are injective, Proposition \ref{4.3}(a) implies that these modules do not change under induction.  Also, there is a $C$-module isomorphism Ext$^1_C(\tilde{I}_C,DB)\cong \text{Ext}^1_C(\tilde{I}_C, DC\oplus DE)=0$, by Corollary~\ref{4.7}. Therefore, using the fact that  $-\otimes B=D\text{Hom}_C(-,DB)$, we see that applying the induction functor produces the following short exact sequence.
$$\xymatrix@C=2cm {0\ar[r]&M\otimes B \ar[r]^-{\bigl(\begin{smallmatrix} i_0u_1\\g_2\end{smallmatrix} \bigr)}&I^0_C\oplus \tau\Omega\tau^{-1}\Omega^{-1}M \otimes B\ar[r]^-{\bigl(\begin{smallmatrix} g_0,&-\pi_0\delta _1\end{smallmatrix}\bigr)}&\tilde{I}_C\ar[r]&0}$$
Here, $g_2$ is the image of $g_1$ under the induction functor, and $\delta_1,\,u_1$ are projections as in Proposition \ref{3.6}(a), such that there is the following commutative diagram 
\begin{equation}\label{5} \xymatrix{M\otimes B \ar[d]^{g_2}\ar[r]^{u_1}&M \ar[r]\ar[d]^{g_1}&0\\
 \tau\Omega\tau^{-1}\Omega^{-1}M\otimes B \ar[r]^{\delta _1}&\tau\Omega\tau^{-1}\Omega^{-1}M\ar[r]&0.} \end{equation}
Thus we have constructed the bottom row as in the conclusion of the lemma.  It suffices to show that the top row is exact and that the diagram is commutative.  First observe that the first square commutes trivially and the second one commutes because $\pi_0 g_1=g_0i_0$, by Lemma \ref{5.1}.  Next, note that the top row is exact at $M\otimes B$ by commutativity of the diagram and it is exact at $\tau\Omega\tau^{-1}\Omega^{-1} M$ because $\delta_1$ is surjective.  

Therefore, it remains to show that  Im$\bigl(\begin{smallmatrix} u_1\\g_2\end{smallmatrix}\bigr)$=ker$\bigl(\begin{smallmatrix} g_1,&-\delta_1\end{smallmatrix}\bigr)$. The forward inclusion holds, since  $\bigl(\begin{smallmatrix} g_1,&-\delta_1\end{smallmatrix}\bigr)$$\bigl(\begin{smallmatrix} u_1\\g_2\end{smallmatrix}\bigr)$=$g_1u_1-\delta_1g_2=0$, by diagram (\ref{5}).  Now suppose that $g_1(a)-\delta_1(b)=0$ for some $a\in M$ and $b\in \tau\Omega\tau^{-1}\Omega^{-1} M \otimes B$.  Applying $\pi_0$ to both sides we obtain $\pi_0 g_1(a)-\pi_0\delta_1(b)=0$ or equivalently $g_0i_0(a)-\pi_0\delta_1(b)=0$, by commutativity.  Since the bottom row is exact we can find $p\in M\otimes B$ such that $i_0u_1(p)=i_0(a)$ and $g_2(p)=b$.  Note that $i_0$ is injective so $u_1(p)=a$.  Finally, $\bigl(\begin{smallmatrix} u_1\\g_2 \end{smallmatrix}\bigr) (p)$=$\bigl(\begin{smallmatrix} a\\b \end{smallmatrix}\bigr)$. This shows the reverse inclusion and completes the proof of the lemma. 
\end{proof}

The next proposition describes an isomorphism between induction and coinduction for a particular type of modules.
\begin{prop}\label{5.5}
Suppose {\upshape{gl.dim}}$\,C=2$ and $M=\tau\Omega\tau^{-1}\Omega^{-1}N$ for some $N\in$ {\upshape{mod}}$\,C$.  Then the induction of $M$ is isomorphic to the coinduction of $M\otimes E$ and there is a commutative diagram with exact rows 
{\upshape{$$\xymatrix {0\ar[r]&M\otimes E \ar[r]^{1\otimes i}\ar[d]^{\phi}&M\otimes B \ar[r]^{1\otimes \pi}\ar[d]^{\theta}&M\otimes C \ar[r]\ar[d]^{\psi}&0\\
0\ar[r]&\text{Hom}_C(C,M\otimes E)\ar[r]^{\pi^{*}}&\text{Hom}_C(B,M\otimes E)\ar[r]^{i^{*}}&\text{Hom}_C(E,M\otimes E)\ar[r]&0}
$$}}
where $\phi,\theta,$and $\psi$ are isomorphisms of $B$-modules. 
\end{prop}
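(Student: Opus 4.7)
The plan is to derive the two rows of the diagram from the short exact sequence of $B$-bimodules $0\to E\to B\to C\to 0$, and then to define compatible vertical maps. Applying $M\otimes_C -$ produces the top row; left exactness at $M\otimes E$ follows from $\textup{Tor}^C_1(M,C)=0$, which holds because $C$ is free as a left $C$-module. Applying $\textup{Hom}_C(-,M\otimes E)$ produces the bottom row; surjectivity on the right uses $\textup{Ext}^1_C(C,M\otimes E)=0$, again by projectivity of $C$.

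I would then define the vertical maps explicitly. Let $\phi: M\otimes E \to \textup{Hom}_C(C,M\otimes E)$ be the canonical evaluation isomorphism $x\mapsto (c\mapsto xc)$; after the identification $M\otimes C \cong M$ (as $B$-modules, with the $B$-action on $M$ obtained via $\pi$), let $\psi: M \to \textup{Hom}_C(E,M\otimes E)$ be the adjunction unit $m\mapsto (e\mapsto m\otimes e)$; and define
\[
\theta: M\otimes B \longrightarrow \textup{Hom}_C(B,M\otimes E), \qquad \theta(m\otimes b)(b') = m\otimes \pi_E(bb'),
\]
where $\pi_E: B\to E$ is the projection onto $E$ coming from the splitting $B\cong C\oplus E$. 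The verifications that $\theta$ is well defined on the tensor product, is $C$-linear in $b'$ and right $B$-linear in $m\otimes b$, and that both squares commute, are routine; the crucial simplification is that $E^2=0$ in a relation extension, so that $\pi_E(bb')$ collapses to a single $C$-action whenever one of $b,b'$ lies in $E$.

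Since $\phi$ is trivially an isomorphism, the crux reduces to showing that $\psi$ is an isomorphism; the Five-Lemma then yields that $\theta$ is an isomorphism of $B$-modules as well. To prove $\psi$ is an isomorphism, I would use Lemma \ref{*} and Proposition \ref{4.1}(c)--(d) to produce the chain of natural identifications
\[
\textup{Hom}_C(E,\,M\otimes E) \cong D\bigl(E\otimes D(M\otimes E)\bigr) \cong \tau\Omega(M\otimes E) \cong \tau\Omega\tau^{-1}\Omega^{-1} M.
\]
The hypothesis $M=\tau\Omega\tau^{-1}\Omega^{-1}N$ combined with Proposition \ref{5.2} applied to $N$ gives $\tau^{-1}\Omega^{-1}M \cong \tau^{-1}\Omega^{-1}N$, and applying $\tau\Omega$ yields $\tau\Omega\tau^{-1}\Omega^{-1}M \cong M$.

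The main obstacle I foresee is matching the concrete adjunction map $\psi$ to this chain of natural identifications: the computation above exhibits some abstract isomorphism $M \cong \textup{Hom}_C(E,M\otimes E)$, but identifying it with $\psi$ itself requires tracing $\psi$ through Lemma \ref{*}, the Auslander--Reiten formula implicit in Proposition \ref{4.1}, and the explicit splitting $\Omega^{-1}M\cong I'\oplus \Omega^{-1}\tau\Omega\tau^{-1}\Omega^{-1}M$ constructed in the proof of Proposition \ref{5.2}.
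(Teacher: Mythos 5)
Your construction of the rows, the explicit maps $\phi$, $\theta$, $\psi$, and the Five-Lemma reduction all match the paper's proof; your $\theta(m\otimes b)(b')=m\otimes\pi_E(bb')$ unwinds to exactly the paper's formula $\theta(m\otimes(c,e))(c',e')=m\otimes(ce'+ec')$. (A minor cosmetic difference: the paper cites Proposition~\ref{3.6} for exactness of the rows rather than arguing from flatness/projectivity of $C$, but the content is the same.)

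Your final paragraph correctly names the real gap, and your instinct that something is missing is right, but the obstacle you foresee is avoidable. The chain
\[
\text{Hom}_C(E, M\otimes E)\cong D\bigl(E\otimes D(M\otimes E)\bigr)\cong\tau\Omega(M\otimes E)\cong\tau\Omega\tau^{-1}\Omega^{-1}M\cong M
\]
only needs to give you an \emph{abstract} isomorphism; you do not have to trace $\psi$ through Lemma~\ref{*}, Proposition~\ref{4.1}, and the splitting in Proposition~\ref{5.2}. Since everything in sight is finite-dimensional, the abstract isomorphism gives an equality of dimensions, and then it suffices to prove that the concrete map $\psi$ is \emph{injective}. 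The paper does this directly: under the identification $M\cong\text{Hom}_C(E,N\otimes E)$, the $C$-balanced pairing $(m,e)\mapsto m(e)$ induces a map $\Psi\colon M\otimes_C E\to N\otimes E$ with $\Psi(m\otimes e)=m(e)$; if $\psi(m\otimes 1)=0$, i.e.\ $m\otimes e=0$ for all $e\in E$, then $\Psi(m\otimes e)=m(e)=0$ for all $e$, so $m=0$ as a homomorphism and hence as an element of $M$. Replacing your proposed ``tracing'' step with this short injectivity check (plus the dimension count) closes the gap and completes the proof.
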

 
\begin{proof}
First observe that the top row is a short exact sequence by Proposition \ref{3.6}(a), and the bottom row is a short exact sequence by Propositions \ref{3.6}(b) and \ref{3.3}(b).  The maps $i$ and $\pi$ are as in sequence \eqref{(1)}.  Next we explicitly describe the rest of the maps that appear in the diagram above.  

Let us begin with the definition of $\psi$.  First, consider 
\begin{align*}
M&\cong \tau\Omega\tau^{-1}\Omega^{-1}N&&\\&\cong D(E\otimes D\tau^{-1}\Omega^{-1}N)&\text{by}&\;\text{Proposition \ref{4.1}(d)}\\& \cong \text{Hom}_C(E, \tau^{-1}\Omega^{-1} N)&\text{by}&\;\text{Lemma \ref{*} }\\&\cong \text{Hom}_C(E, N\otimes E)&\text{by}&\;\text{Proposition \ref{4.1}(c)}.
\end{align*}   
Then  Proposition \ref{5.2} implies that 
$$\label{9} M\otimes E\cong N\otimes E$$ 
and in turn 
$$\text{Hom}_C(E,M\otimes E)\cong \text{Hom}_C(E,N\otimes E)\cong M.$$ 
Now, there exists a unique $C$-module homomorphism $\psi$ such that 
$$\psi: M\otimes C \rightarrow \text{Hom}_C(E,M\otimes E) $$
$$m\otimes c \longmapsto (e \mapsto mc\otimes e). $$
%Observe that 
%\begin{align*}
%\psi (m\otimes c)\cdot c'&=(e\mapsto mc\otimes e)\cdot c'\\& = (e\mapsto mc\otimes c'e)\\&=(e\mapsto mcc'\otimes e)\\&=\psi(m\otimes c \cdot c')
%\end{align*}
%and it is easy to see that $\psi (m\otimes c+m'\otimes c')=\psi (m\otimes c)+\psi (m'\otimes c')$ for all $m,m'\in M$ and $c,c' \in C$. This shows that $\psi$ is a $C$-module homomorphism.  
Next, we want to show that $\psi$ is injective.  Since $M\otimes_C C\cong M$, it suffices to show that $\psi(m\otimes 1)=0$ if and only if $m=0$.  Now suppose that $\psi(m\otimes 1)=0$ for some $m\otimes 1 \in M\otimes C$, which means $m\otimes e=0$, for all $e\in E$, and we need to show that $m=0$.  For this we use the universal property of the tensor product.  Consider the diagram 
$$\xymatrix{M\times E \ar[r] \ar[dr]_{\overline{\psi}}& M\otimes _C E \ar@{.>}[d]^{\Psi}\\& N\otimes E}$$ 
where $\overline{\psi} (m,e)=m(e)$. Recall that we can think of $M$ as Hom$_C (E, N\otimes E)$, so here by $m(e)$ we understand a map $m$ evaluated at an element $e\in E$.  One can check that $\overline{\psi}$ is a $C$-balanced map, and the universal property of the tensor product implies that there exists a unique $C$-module homomorphism $\Psi$ such that $\Psi (m\otimes e)=m(e)$.  Now suppose $m\otimes e =0 $ for all $e \in E$.  Then $\Psi (m\otimes e)=m(e)=0$ for all $e\in E$, which means that $m$ is the zero map, thus $m=0$.    This shows that $\psi$ is an injective $C$-module homomorphism, but since $M\cong \text{Hom}_C (E, M\otimes E)$ are finite dimensional, this shows that $\psi$ is a $C$-module isomorphism.  Because every $C$-module is also a $B$-module by defining the action of $E$ to be trivial, then $\psi$ is also a $B$-module isomorphism.

Now we define $\theta$, and again we use the universal property of the tensor product.  Consider the diagram  
$$\xymatrix{M\times B \ar[r] \ar[dr]_{\varphi}& M\otimes _C B \ar@{.>}[d]^{\theta}\\& \text{Hom}_C(B,M\otimes E)}$$ 
 where $$\varphi : (m, (c,e))\longmapsto ( (c',e')\mapsto m\otimes (ce'+ec')).$$
Again one can easily check that this map is $C$-balanced, so the universal property of the tensor product implies that there exists a unique $C$-module homomorphism $\theta$ such that 
$$\theta (m\otimes (c,e))=((c',e')\mapsto m\otimes(ce'+ec')).$$   
Now we want to show that $\theta$ is a $B$-module homomorphism.  Observe that for all $(\tilde{c},\tilde{e})\in B$ and $m\otimes (c,e)\in M\otimes B$ we have 
\begin{align*}
\theta (m\otimes (c,e))\cdot (\tilde{c},\tilde{e})&=((c',e')\mapsto m\otimes (ce'+ec'))\cdot (\tilde{c},\tilde{e})\\
&=((c',e')\mapsto m\otimes (c(\tilde{c}e'+\tilde{e}c')+e\tilde{c}c')).
\end{align*}
 On the other hand  
\begin{align*}
\theta (m\otimes (c,e)\cdot (\tilde{c},\tilde{e}))&=\theta(m\otimes (c\tilde{c},c\tilde{e}+e\tilde{c}))\\&=((c',e')\mapsto m\otimes (c\tilde{c}e'+(c\tilde{e}+e\tilde{c})c')),
\end{align*}
 and the two expressions are the same.  This shows that $\theta$ is a $B$-module homomorphism.  

Finally, we define the morphism $\phi$.  Let  
$$\phi: M\otimes E \rightarrow \text{Hom}_C (C, M\otimes E)$$ 
$$\phi: m\otimes e \longmapsto (c\mapsto (m\otimes e)\cdot c)$$
which is a standard isomorphism of $C$-modules.  By the same reasoning as above it is also an isomorphism of $B$-modules.  
Thus we defined all morphisms appearing in the proposition, so it remains to show that the corresponding diagram is commutative.  Let $m\otimes e\in M\otimes E$, then consider 
\begin{align*}
\pi^{*}\phi (m\otimes e)&=\pi^{*} (c\mapsto(m\otimes e)\cdot c)\\
&=(c\mapsto (m\otimes e)\cdot c)\circ \pi\\
& =((c',e')\mapsto m\otimes e \cdot \pi (c',e'))\\
&=((c',e')\mapsto m\otimes e \cdot (c',0))\\
&=((c',e')\mapsto m\otimes ec').\end{align*}
 On the other hand 
\begin{align*}
\theta (1\otimes i) (m\otimes e)&=\theta (m\otimes i(e))\\
&=\theta (m\otimes (0,e))\\
&=((c',e')\mapsto m\otimes ec').
\end{align*}
 This shows that the first square commutes.  Now let $m\otimes (c,e)\in M\otimes B$ and consider 
\begin{align*} i^{*}\theta (m\otimes (c,e))&=i^{*}((c',e')\mapsto m\otimes (ce'+ec'))\\&=(e'\mapsto m\otimes ce'). 
\end{align*}
 Also, 
\begin{align*}
\psi(1\otimes \pi) (m\otimes (c,e))&=\psi(m\otimes c)\\&=(e'\mapsto mc\otimes e')\\&=(e'\mapsto m\otimes ce').
\end{align*}
This shows that the second square commutes.  

Next the Five Lemma implies that $\theta$ is a $B$-module isomorphism.  This completes the proof of the proposition.  
\end{proof}

The following corollary is a reformulation of Proposition \ref{5.5} and will be used in the proof of Theorem \ref{5.7}.

\begin{cor}\label{5.6}
Suppose {\upshape{gl.dim}}$\,C=2$ and $M=\tau\Omega\tau^{-1}\Omega^{-1}N$ for some $N\in$ {\upshape{mod}}$\,C$.  Then there is a commutative diagram with exact rows 
{\upshape{$$\xymatrix {0\ar[r]&{M}\otimes E \ar[r]^{\delta_0}\ar[d]^{\phi'}&M\otimes B \ar[r]^{\delta_1}\ar[d]^{\theta'}&M \ar[r]\ar[d]^{\psi'}&0\\
0\ar[r]&N\otimes E\ar[r]^-{\beta_0}&\text{Hom}_C(B,N\otimes E)\ar[r]^-{\beta_1}&M\ar[r]&0}
$$}}
where $\phi ',\theta '$, and $\psi '$ are isomorphisms of $B$-modules. 
\end{cor}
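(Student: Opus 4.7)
The plan is to deduce Corollary~\ref{5.6} from Proposition~\ref{5.5} by transporting the two rows of the commutative diagram across natural isomorphisms of $B$-modules.

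First, I would handle the top row. Using the canonical identification $M \otimes_C C \cong M$, the top row of Proposition~\ref{5.5} becomes exactly the short exact sequence of Proposition~\ref{3.6}(a) for $M$: namely $0 \to M \otimes E \xrightarrow{\delta_0} M \otimes B \xrightarrow{\delta_1} M \to 0$, with $\delta_0 = 1 \otimes i$ and $\delta_1$ equal to $1 \otimes \pi$ followed by this identification.

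For the bottom row, the cleanest route is to apply Proposition~\ref{3.6}(b) directly to the $C$-module $N \otimes E$. By Proposition~\ref{4.1}(c) and~(d), the cokernel term is $D(E \otimes D(N \otimes E)) \cong \tau\Omega(N \otimes E) \cong \tau\Omega\tau^{-1}\Omega^{-1} N = M$, while Proposition~\ref{3.3}(b) identifies the middle term with $\text{Hom}_C(B, N \otimes E)$. This yields the sequence $0 \to N \otimes E \xrightarrow{\beta_0} \text{Hom}_C(B, N \otimes E) \xrightarrow{\beta_1} M \to 0$. Equivalently, one may transport the bottom row of Proposition~\ref{5.5} via the isomorphisms $\text{Hom}_C(C, M \otimes E) \cong M \otimes E \cong N \otimes E$ (the second one coming from the map $\Psi$ in the proof of Proposition~\ref{5.5} via Proposition~\ref{5.2}), $\text{Hom}_C(B, M \otimes E) \cong \text{Hom}_C(B, N \otimes E)$ (apply $\text{Hom}_C(B, -)$ to $\Psi$), and $\text{Hom}_C(E, M \otimes E) \cong M$ (the inverse of $\psi$ composed with $M \otimes C \cong M$).

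Finally, the vertical isomorphisms $\phi', \theta', \psi'$ are the compositions of $\phi, \theta, \psi$ from Proposition~\ref{5.5} with these identifications, and hence remain $B$-module isomorphisms. Commutativity of the resulting diagram is inherited from commutativity of the diagram in Proposition~\ref{5.5} by naturality of all identifications involved, each of which is itself a $B$-module isomorphism compatible with $i$, $\pi$, $\delta_0$, $\delta_1$, $\beta_0$, and $\beta_1$. I do not anticipate a genuine obstacle here: the mathematical content lies entirely in Proposition~\ref{5.5}, and what remains is the bookkeeping to match the two presentations of the same diagram.
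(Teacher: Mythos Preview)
Your proposal is correct and follows essentially the same approach as the paper: both deduce the corollary from Proposition~\ref{5.5} by noting that the top rows coincide via $M\otimes C\cong M$ and that the bottom rows coincide via the isomorphisms $M\otimes E\cong N\otimes E$ and $\text{Hom}_C(E,M\otimes E)\cong M$ established in the proof of Proposition~\ref{5.5}. Your write-up is in fact more detailed than the paper's, which simply records these equivalences and concludes.
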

\begin{proof}
First observe that the top row of this diagram is equivalent to the top row of the diagram in Proposition \ref{5.5}.  Also, in the proof of this proposition we showed that $N\otimes E \cong M\otimes E$ and $M\cong \text{Hom}_C(E, M\otimes E)$.  This implies that the bottom rows of the two diagrams are also equivalent.  Thus, we conclude that there exist $\phi', \theta',$ and $\psi'$ that make the diagram above commute.  
\end{proof}

\medskip

We are now ready for our main results. 
The following  theorems \ref{5.7} and \ref{main thm}   give an explicit construction of an injective presentation for each induced module over a cluster-tilted algebra.  

\begin{thm}\label{5.7}
Let $C$ be a tilted algebra, $B$ the corresponding cluster-tilted algebra, $M$ a $C$-module such that $\textup{id}\,_C M \geq 1$ and $\textup{pd}_C M \leq 1$.  Let
 $$\xymatrix {0\ar[r]& M \ar[r]&I^0_C \ar[r]&I^1_C&\text{and}& 0\ar[r]& M\otimes E \ar[r]&\bar{I}^0_C \ar[r]&\bar{I}^1_C}$$
be minimal injective presentations in {\upshape{mod}}$\, C$, and let $\tilde{I}_C$ be the injective $C$-module $\tilde{I}_C=\nu\nu^{-1}\Omega^{-1} M$.  Then 
$$\xymatrix {0\ar[r]&M\otimes B \ar[r]&I^0_B\oplus\bar{I}^0_B \ar[r]&\tilde{I}_B\oplus\bar{I}^1_B\ar[r]&\textup{Ext}^1_C(E,M)}$$
is an exact sequence in {\upshape{mod}}$\,B$ that yields an injective presentation of $M\otimes B$.    
\end{thm}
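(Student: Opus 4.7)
The plan is to combine three ingredients: the short exact sequence from the bottom row of the diagram in Lemma \ref{5.4}, the injective copresentation of $M$ supplied by Lemma \ref{5.3}, and the coinduction of the minimal injective resolution of $M \otimes E$ (which is a resolution because $\textup{id}_C(M \otimes E) \leq 1$ by Lemma \ref{4.4}(a)). Throughout, write $X = \tau\Omega\tau^{-1}\Omega^{-1}M$.

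I would first dispose of the boundary case $\textup{id}_C M = 1$. Here $\Omega^{-1}M$ is injective, so $X = 0$, $M \otimes E = \tau^{-1}\Omega^{-1}M = 0$, $\tilde{I}_C \cong I^1_C$, and $M \otimes B \cong M$ by Proposition \ref{4.3}(a). The claimed sequence then reduces to the coinduction of the injective presentation of $M$ in $\textup{mod}\,C$, whose exactness follows from $\textup{pd}_C M \leq 1$ via Proposition \ref{4.3}(b) together with the identification $\textup{Ext}^1_C(B,M) \cong \textup{Ext}^1_C(E,M)$.

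For the main case $\textup{id}_C M = 2$, I would apply $\textup{Hom}_C(B,-)$ to the injective resolution $0 \to M \otimes E \to \bar{I}^0_C \to \bar{I}^1_C \to 0$. Because $\textup{Ext}^1_C(B, M \otimes E) \cong \textup{Ext}^1_C(E, M \otimes E) = 0$ by Lemma \ref{4.6}(c), and Corollary \ref{5.6} with $N = M$ identifies $\textup{Hom}_C(B, M \otimes E) \cong X \otimes B$, this yields a short exact sequence $0 \to X \otimes B \xrightarrow{\iota_0} \bar{I}^0_B \xrightarrow{\iota_1} \bar{I}^1_B \to 0$. I then take the bottom row of the Lemma \ref{5.4} diagram, namely the short exact sequence $0 \to M \otimes B \to I^0_C \oplus (X \otimes B) \to \tilde{I}_C \to 0$ in $\textup{mod}\,B$, and post-compose its middle term with the injective envelope inclusions $I^0_C \hookrightarrow I^0_B$ (from Proposition \ref{3.6}(b)) and $\iota_0$. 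This produces an injection $M \otimes B \hookrightarrow I^0_B \oplus \bar{I}^0_B$. I define the second differential $I^0_B \oplus \bar{I}^0_B \to \tilde{I}_B \oplus \bar{I}^1_B$ by $(a,b) \mapsto (\alpha(a) + \beta(b),\, \iota_1(b))$, where $\alpha$ is from Lemma \ref{5.3} and $\beta : \bar{I}^0_B \to \tilde{I}_B$ is an extension of $-l_1 \pi_0 \delta_1 : X \otimes B \to \tilde{I}_B$ along $\iota_0$ (which exists since $\tilde{I}_B$ is injective in $\textup{mod}\,B$). The vanishing of the composite $M \otimes B \to \tilde{I}_B \oplus \bar{I}^1_B$ then follows from the identities $g_1 u_1 = \delta_1 g_2$ (Lemma \ref{5.4}), $\alpha j_0 = l_1 g_0$ (Lemma \ref{5.3}), $g_0 i_0 = \pi_0 g_1$ (Lemma \ref{5.1}), and $\iota_1 \iota_0 = 0$.

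The main obstacle will be exactness at $\tilde{I}_B \oplus \bar{I}^1_B$. This requires that $\beta$ be chosen so that $\eta\beta = 0$, where $\eta : \tilde{I}_B \to \textup{Ext}^1_C(E,M)$ is the connecting map from Lemma \ref{5.3}, so that the second differential actually lands in $\ker(\eta) \oplus \bar{I}^1_B$. The key observation is that $\textup{Im}(-l_1 \pi_0 \delta_1) \subset l_1(\tilde{I}_C) \subset \ker(\eta)$, so any initial extension $\beta$ can be refined by subtracting a suitable map into $\textup{Im}(\alpha) \subset \ker(\eta)$ so that $\textup{Im}(\beta) \subset \ker(\eta)$. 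Once $\beta$ is so chosen, the inclusion of the image of the second differential into $\ker(\eta) \oplus \bar{I}^1_B$ is immediate, and the reverse inclusion follows by a diagram chase combining the surjectivity $\bar{I}^0_B \twoheadrightarrow \bar{I}^1_B$ from the coinduction step, the surjection $I^0_B \oplus \tilde{I}_C \twoheadrightarrow \ker(\eta)$ from Lemma \ref{5.3}'s bottom row, and the compatibility between the bottom rows of Lemmas \ref{5.3} and \ref{5.4}.
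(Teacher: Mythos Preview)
Your overall architecture matches the paper's: reduce the case $\textup{id}_C M=1$ to a direct coinduction, and in the case $\textup{id}_C M=2$ splice together the bottom row of Lemma~\ref{5.4}, the copresentation from Lemma~\ref{5.3}, and the coinduced resolution of $M\otimes E$ via the identification of Corollary~\ref{5.6}. The first map $M\otimes B\hookrightarrow I^0_B\oplus\bar I^0_B$ is exactly the paper's $\bigl(\begin{smallmatrix} j_0 i_0 u_1\\ \gamma_0\theta' g_2\end{smallmatrix}\bigr)$, and your final diagram chase for exactness at $\tilde I_B\oplus\bar I^1_B$ is essentially the paper's, \emph{provided} you have $\eta\beta=0$.

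That proviso is where your argument breaks. You propose to start with an arbitrary $B$-linear extension $\beta:\bar I^0_B\to\tilde I_B$ of $-l_1\pi_0\delta_1$ along $\iota_0$, and then ``refine $\beta$ by subtracting a suitable map into $\textup{Im}(\alpha)\subset\ker(\eta)$'' to force $\textup{Im}(\beta)\subset\ker(\eta)$. But subtracting a map whose image already lies in $\ker(\eta)$ does not change $\eta\beta$ at all, so this refinement accomplishes nothing. What you actually need is a $B$-map $\delta:\bar I^0_B\to\tilde I_B$ with $\delta\iota_0=0$ and $\eta\delta=\eta\beta$; the existence of such a $\delta$ is equivalent to lifting a map $\bar I^1_B\to\textup{Ext}^1_C(E,M)$ through $\eta$, and nothing in your outline guarantees that.

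The paper avoids this obstruction by never taking an arbitrary extension in $\textup{mod}\,B$. Instead it uses the full commutative diagram obtained by coinducing the resolution of $M\otimes E$ (the paper's diagram~(\ref{6})), which records not only your $\iota_0,\iota_1$ but also the cokernel map $v_1:\bar I^0_B\to\tau\Omega\bar I^0_C$ and the monomorphism $\gamma_1\psi':X\hookrightarrow\tau\Omega\bar I^0_C$. Since $\tilde I_C$ is injective in $\textup{mod}\,C$ and $\gamma_1\psi'$ is a $C$-monomorphism, one can extend $\pi_0:X\to\tilde I_C$ to a map $\gamma:\tau\Omega\bar I^0_C\to\tilde I_C$ with $\gamma\gamma_1\psi'=\pi_0$. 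Setting $\beta=-l_1\gamma v_1$ then gives a \emph{$B$-linear} extension of $-l_1\pi_0\delta_1$ along $\iota_0$ (check: $v_1\iota_0=\gamma_1\psi'\delta_1$), and by construction it factors through $l_1(\tilde I_C)\subset\ker\eta$. In short, the missing idea is to perform the extension in $\textup{mod}\,C$ through the cokernel $\tau\Omega\bar I^0_C$, and only afterwards embed into $\tilde I_B$ via $l_1$; this is precisely what makes $\eta\beta=0$ automatic. You also omit the verification of exactness at $I^0_B\oplus\bar I^0_B$ (kernel equals image, not just that the composite vanishes), though once $\beta$ is specified as above that chase is routine using the bottom rows of Lemmas~\ref{5.3} and~\ref{5.4}.
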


\begin{remark}
If the injective dimension of $M$ equals one, then $M\otimes E=0$, by Proposition~\ref{4.3}(a), and $\tilde{I}_C=\nu\nu^{-1}\Omega^{-1}M=\nu\nu^{-1}I^1_C=I^1_C$.  Thus in this case the exact sequence becomes  
$$\xymatrix {0\ar[r]& M\otimes B \, = M\ar[r]&I^0_B\ar[r]&I^1_B\ar[r]&\text{Ext}^1_C(E,M).}$$
Moreover, this yields a \emph{minimal} injective presentation of $M\otimes B$ in mod$\,B$. 
\end{remark}

\begin{proof}
If id$_C M = 1$, then  consider a minimal injective resolution of $M$ in mod$\,C$ 
$$\xymatrix{0\ar[r]&M\ar[r]&I^0_C \ar[r]&I^1_C\ar[r]&0}$$
of length at most one.  We apply the coinduction functor Hom$_C(B,-)$ to the injective resolution of $M$ and obtain 
$$\xymatrix{0\ar[r]&M\ar[r]&I^0_B\ar[r]&I^1_B\ar[r]&\text{Ext}_C^1(B,M)}.$$
Indeed the injectives in mod$\,C$ will map to the corresponding injectives in mod$\,B$, and $M$ will not change, since $M\otimes B=M$ by Proposition \ref{4.3}(a).   Also, there is a $C$-module isomorphism Ext$_C^1(B, M)\cong \text{Ext}_C ^ 1 (C\oplus E, M)\cong \text{Ext}_C^1(E,M)$,
%Finally, observe that since id$_C M = 1$ then $M\otimes B \cong M$ by Propositions \ref{4.3}(a).  It also shows that $M\otimes E = 0$, which means that both injectives $\bar{I}^0_B$ and $\bar{I}^1_B$ are zero.  Finally, the remark above implies that $\tilde{I}_B=I_B^1$.   Thus, if id$_C M = 1$ we obtain the exact sequence for $M\otimes B$ as in the statement of the theorem. 
and the result follows from the remark.

Now assume that id$_C M =2$.  We start by defining the morphisms in the sequence.  In order to do so, consider the following commutative diagram

\begin{equation} \label{6}
\xymatrix{&0\ar[d]&0\ar[d]&0\ar[d]&\\
0\ar[r]&M\otimes E \ar[r]^-{\beta_0}\ar[d]^{\epsilon}&\text{Hom}_C(B, M\otimes E)\ar[r]^-{\beta_1}\ar[d]^{\gamma_0}&\tau\Omega\tau^{-1}\Omega^{-1}M\ar[r]\ar[d]^{\gamma_1}&0\\
0\ar[r]&\bar{I}^0_C\ar[r]^{v_0}\ar[d]^{\kappa_1}&\bar{I}^0_B\ar[r]^{v_1}\ar[d]^{\pi}&\tau\Omega\bar{I}^0_C\ar[d]^{\gamma_2}\ar[r]&0\\
0\ar[r]&\bar{I}^1_C\ar[r]^{l_2}\ar[d]&\bar{I}^1_B\ar[r]^{\kappa_2}\ar[d]&\tau\Omega\bar{I}^1_C\ar[d]\ar[r]&0\\
&0&0&0&}
\end{equation}
where every row and column is exact.  To construct this diagram, we begin with the injective resolution in mod$\,C$ of $M\otimes E$ as in the statement of the theorem. Note that, by Lemma~\ref{4.4}(a), id$_C M\otimes E \leq 1$.  This sequence appears in the left most column of the diagram above.  Then we apply the coinduction functor Hom$_C(B,-)$ to this sequence and recall that Ext$_C^1(E, M\otimes E)=0$, by Corollary \ref{4.6}(c).  This gives us the short exact sequence to the right of the one we started with, which is the middle column of the diagram.  We also obtain inclusions $\beta_0, v_0,l_2$ from the given $C$-modules to the corresponding coinduced $B$-modules and projections $\beta_1, v_1, k_2$ to the corresponding cokernels as in Proposition \ref{3.6}(b).  By the same proposition, cok$\, \beta_0 = D(E\otimes D(M\otimes E))\cong \tau\Omega\tau^{-1}\Omega^{-1}M$, where the last identity follows from Proposition \ref{4.1}.  Similarly, cok$\,v_0\cong \tau\Omega \bar{I}_C^0$, and cok$\, l_2 \cong \tau \Omega\bar{I}_C ^1$.  Thus we obtain the commutative diagram (\ref{6}).

Now we construct the commutative diagram (\ref{8}) with exact rows, which appears below.   We obtain the bottom two rows from Corollary \ref{5.6} by letting $N=M$. Next, we draw a commutative diagram (\ref{5}) in the top right corner, where the maps $g_1$ and $g_2$ are as in Lemma~\ref{5.4}.   We complete the top row by $M\otimes E$, the kernel of $u_1$, as in Proposition \ref{3.6}(a).  Finally, by the universal property of ker$\,\delta_1$ there exists a morphism $\epsilon_0$ that completes the diagram and makes the upper left square commute.  Note that the bottom row of diagram (\ref{8}) is the same as the top row of diagram (\ref{6}).  

\begin{equation} \label{8}
\xymatrix{ 0\ar[r]& M\otimes E \ar[r]^{u_0}\ar[d]^{\epsilon_0}& M\otimes B\ar[r]^{u_1}\ar[d]^{g_2}&M \ar[r]\ar[d]^{g_1}&0\\
0\ar[r]&M\otimes E\ar[r]^-{\delta_0}\ar[d]^{\phi'}&\tau\Omega\tau^{-1}\Omega^{-1}M\otimes B\ar[r]^-{\delta_1}\ar[d]^{\theta'}&\tau\Omega\tau^{-1}\Omega^{-1}M\ar[r]\ar[d]^{\psi'}&0\\
0\ar[r]&M\otimes E \ar[r]^-{\beta_0}&\text{Hom}_C(B, M\otimes E)\ar[r]^-{\beta_1}&\tau\Omega\tau^{-1}\Omega^{-1}M\ar[r]&0.}
\end{equation}

Next we want to show that $\epsilon_0$ is an isomorphism.  Observe that, since $M\otimes E$ is finite dimensional, it suffices to show that $\epsilon_0$ is injective.  Suppose $\epsilon_0 (a)=0$, for some $a\in M\otimes E$.  Let $u_0(a)=b$.  Because $u_0$ is injective, it is enough to show that $b=0$.   By commutativity $g_2(b)=0$, but looking at the top row of Lemma \ref{5.4}, we conclude that either $b=0$ or $u_1(b)\not=0$. In the first case we are done, so suppose $u_1(b)\not=0$.  Then $u_1(b)=u_1u_0(a)\not=0$, which is a contradiction since the top row of the diagram (\ref{8}) is exact.  This shows that $a=b=0$ and that $\epsilon_0$ is an isomorphism.  

Now we show that the following sequence as in the statement of the theorem is exact 
\begin{equation}\label{eq 7.5} 
\xymatrix@C=2cm{0\ar[r]&M\otimes B \ar[r]^-{\bigl(\begin{smallmatrix} j_0i_0u_1\\ \gamma_0\theta' g_2\end{smallmatrix}\bigr)}&I^0_B\oplus \bar{I}^0_B\ar[r]^-{\bigl( \begin{smallmatrix} \alpha& -l_1\gamma v_1\\ 0& \pi \end{smallmatrix}\bigr)} & \tilde{I}_B\oplus \bar{I}^1_B\ar[r]^-{\bigl( \begin{smallmatrix} \eta & 0 \end{smallmatrix}\bigr)}&\text{Ext}^1_C(E,M)}
\end{equation}
where the maps are $v_1, \pi, \gamma_0$ are given in diagram (\ref{6}), the maps $\theta', u_1, g_2$ in diagram (\ref{8}), the maps $i_0, j_0, \alpha, l_1, \eta$ in Lemma \ref{5.3}, and $\gamma$ in diagram (\ref{7}) below.   
\begin{equation}\label{7} \xymatrix{\tau\Omega\tau^{-1}\Omega^{-1}P_C\ar[r]^-{\pi_0}\ar[d]_{\gamma_1 \psi'}&\tilde{I}_C\\ \tau\Omega\bar{I}^0_C\ar@{.>}[ur]_{\gamma}&}
\end{equation} 
Here $\pi_0$ is the map of Lemma \ref{5.1}. Observe that $\gamma$ exists because $\tilde{I}_C$ is injective and $\gamma_1 \psi'$ is an injective map.  Moreover, the map $\gamma$ makes the diagram commute, that is $\gamma\gamma_1\psi'=\pi_0$.

First we show that the sequence we defined above is exact at $M\otimes B$. Suppose $\bigl(\begin{smallmatrix}j_0i_0u_1\\\gamma_0\theta' g_2\end{smallmatrix}\bigr)(p)=0$ for some $p\in M\otimes B$.  So on the one hand $j_0i_0u_1(p)=0$, but $j_0$ and $i_0$ are injective, which means $u_1(p)=0$.  By diagram (\ref{8}) there exists $b\in M\otimes E$ such that $u_0(b)=p$.  On the other hand, $\gamma_0\theta' g_2 u_0 (b)=0$, but by commutativity in diagram (\ref{8}) this is equivalent to $\gamma_0\beta_0 \phi' \epsilon_0 (b)=0$.  Because, all of these maps are injective it follows that $b=0$, which implies $p=0$.  This shows that $\bigl(\begin{smallmatrix} j_0i_0u_1\\ \gamma_0\theta' g_2\end{smallmatrix}\bigr)$ is an injective map. 

Next we show that im$\bigl(\begin{smallmatrix} j_0i_0u_1\\ \gamma_0\theta' g_2\end{smallmatrix}\bigr)$=ker$\bigl( \begin{smallmatrix} \alpha& -l_1\gamma v_1\\ 0& \pi \end{smallmatrix}\bigr)$, meaning  that the sequence is exact at $I^0_B\oplus \bar{I}^0_B$.  To show the forward inclusion consider $\bigl( \begin{smallmatrix} \alpha& -l_1\gamma v_1\\ 0& \pi \end{smallmatrix}\bigr) \bigl(\begin{smallmatrix} j_0i_0u_1\\ \gamma_0\theta' g_2\end{smallmatrix}\bigr)=\bigl(\begin{smallmatrix}\alpha j_0 i_0 u_1-l_1\gamma v_1 \gamma_0 \theta' g_2\\ \pi\gamma_0\theta' g_2 \end{smallmatrix}\bigr)$.  Observe that $\pi\gamma_0=0$ by diagram (\ref{6}), which means that the bottom entry is zero.  The top entry is also zero, because 
\begin{align*}
\alpha j_0 i_0 u_1-l_1\gamma v_1 \gamma_0 \theta' g_2&=l_1(\pi_0g_1u_1-\gamma v_1 \gamma_0 \theta' g_2)&\text{by} & \;\text{first row in Lemma \ref{5.3}}\\&=l_1(\gamma\gamma_1\psi' g_1 u_1-\gamma v_1 \gamma_0 \theta' g_2)&\text{by}&\;\text{diagram (\ref{7})}\\&=l_1\gamma (\gamma_1 \psi' g_1 u_1-v_1\gamma_0 \theta' g_2)&&\\& = l_1\gamma (\gamma_1\beta_1 \theta' g_2-v_1\gamma_0\theta' g_2)&\text{by}&\;\text{ diagram (\ref{8})},
\end{align*}
which is zero by commutativity of diagram (\ref{6}).  To show the reverse inclusion suppose $\bigl( \begin{smallmatrix} \alpha& -l_1\gamma v_1\\ 0& \pi \end{smallmatrix}\bigr) \bigl( \begin{smallmatrix} a\\ b\end{smallmatrix}\bigr) =\bigl( \begin{smallmatrix} 0\\ 0 \end{smallmatrix}\bigr)$ for some $a\in I^0_B, b\in \bar{I}^0_B$.  Since $\alpha (a) -l_1\gamma v_1 (b)=0$ and the bottom row in the diagram of Lemma \ref{5.3} is exact, there exists $c\in I^0_C$ such that $j_0(c)=a$ and $g_0(c)=\gamma v_1(b)$.  Also, we have $\pi(b)=0$, so by diagram (\ref{6}) there exists $d\in \text{Hom}_C(B, M\otimes E)$, such that $\gamma_0 (d)=b$.  Since $\theta'$ is an isomorphism we can find $e\in \tau\Omega\tau^{-1}\Omega^{-1} P_C \otimes B$ such that $\theta'(e)=d$.  Now we have 
\begin{align*}
g_0(c)&=\gamma v_1 \gamma _0 \theta' (e) \\&= \gamma\gamma_1\beta_1\theta'(e)&\text{by}&\;\text{diagram (\ref{6})}\\&=\gamma\gamma_1\psi'\delta_1 (e)&\text{by}&\;\text{diagram (\ref{8})}\\&=\pi_0\delta_1(e)&\text{by}&\;\text{diagram (\ref{7}).}
\end{align*}
Equivalently we can write $g_0(c)-\pi_0 \delta _ 1 (e) = 0$, and since the bottom row in the diagram of Lemma \ref{5.4} is exact, there exists $p\in M\otimes B$ such that $i_0 u_i(p)=c$ and $g_2(p)=e$.  Now consider $\bigl( \begin{smallmatrix} j_0 i_0 u_1 \\ \gamma _0 \theta' g_2 \end{smallmatrix}\bigr) (p)= \bigl( \begin{smallmatrix} j_0 (c)\\ \gamma_0 (d)   \end{smallmatrix}\bigr)=\bigl( \begin{smallmatrix} a\\ b \end{smallmatrix}\bigr)$.  This shows exactness at $I^0_B \oplus \bar{I}^0_B$.  

It remains to show that the sequence is exact at $\tilde{I}_B \oplus \bar{I}_B$.  In other words, we claim that $\text{Im}\bigl( \begin{smallmatrix} \alpha& -l_1\gamma v_1 \\ 0 & \pi\end{smallmatrix}\bigr)=\text{ker}\bigl( \begin{smallmatrix} \eta & 0\end{smallmatrix}\bigr)$.  Consider $\bigl( \begin{smallmatrix} \eta & 0\end{smallmatrix}\bigr)\bigl( \begin{smallmatrix} \alpha& -l_1\gamma v_1 \\ 0 & \pi\end{smallmatrix}\bigr)=\bigl( \begin{smallmatrix} \eta\alpha & -\eta l_1 \gamma v_1\end{smallmatrix}\bigr)$.  By commutativity and exactness of diagram (\ref{d3}) it follows that $\eta \alpha =0$ and $\eta l_1 =0$.  Therefore, $\text{Im}\bigl( \begin{smallmatrix} \alpha& -l_1\gamma v_1 \\ 0 & \pi\end{smallmatrix}\bigr)\subset \text{ker}\bigl( \begin{smallmatrix} \eta & 0\end{smallmatrix}\bigr)$.

To show the reverse inclusion, suppose $\bigl( \begin{smallmatrix} \eta & 0\end{smallmatrix}\bigr) \bigl( \begin{smallmatrix} a \\ b\end{smallmatrix}\bigr)=0$ for some $a\in \tilde{I}_B$, $b\in\bar{I}^1_B$.  Since $\pi$ is surjective it suffices to show that if $\eta(a)=0$ then $a\in \text{Im} \, \bigl( \begin{smallmatrix} \alpha & -l_1 \gamma  v_1 \end{smallmatrix}\bigr)$.  That is, given $a\in \tilde{I}_B$ find $c\in I^0_B,\; d\in \bar{I}^0_B$ such that $\alpha(c)-l_1\gamma v_1 (d) =a$.  Since $\eta(a)=0$, Lemma \ref{5.3} implies that there exist $c'\in I^0_B$ and $e\in \tilde{I}_C$ such that $\alpha (c')-l_1(e)=a$.  Then by Lemma \ref{5.4},  there exist $m\in I^0_C$ and $n\in \tau\Omega\tau^{-1}\Omega^{-1} M \otimes B$ such that $g_0(m)-\pi_0 \delta_1 (n)=e$.  Finally, let $d=\gamma _0 \theta' (n) \in \bar{I}^0_B$ and $c=j_0(m)+c' \in I^0_B$ and observe that  
\begin{align*}
\alpha (j_0(m)+c')-l_1\gamma v_1 (\gamma_0 \theta' (n))&=l_1g_0(m)+\alpha (c')-l_1\gamma v_1 \gamma_0 \theta' (n)&\text{by}&\;\text{diagram (\ref{4})}\\&=\alpha(c')+l_1(g_0 (m)-\gamma v_1 \gamma_0\theta' (n))&&\\&=\alpha(c')-l_1(g_0(m)-\gamma\gamma_1\psi'\delta_1(n))&\text{by}&\;\text{diagrams (\ref{6}), (\ref{8})}\\&=\alpha(c')-l_1(g_0(m)-\pi_0\delta_1(n))&\text{by}&\;\text{diagram (\ref{7})}\\&=a.
\end{align*}
This shows the claim and finishes the proof of the theorem.
\end{proof}

\begin{remark}
The theorem above can be dualized.  That is, in a similar manner one can construct projective presentations of coinduced $B$-modules.
\end{remark}

As a corollary, we obtain an injective resolution for each projective $B$-module.
\begin{cor}\label{cor proj} With the notation of Theorem \ref{5.7}, 
 if $M= P_C$ is a projective $C$-module, then  
 $$\xymatrix {0\ar[r]& P_C\otimes B \ar[r]&I^0_B\oplus\bar{I}^0_B \ar[r]&\tilde{I}_B\oplus\bar{I}^1_B\ar[r]&0}$$
 is an injective resolution in $\textup{mod}\,B$.
\end{cor}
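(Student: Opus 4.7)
The plan is to derive Corollary \ref{cor proj} directly from Theorem \ref{5.7} by showing that the Ext term appearing on the right of the sequence in Theorem \ref{5.7} vanishes when $M = P_C$ is projective.

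First, since $P_C$ is projective, $\textup{pd}_C P_C = 0 \leq 1$, so the hypotheses of Theorem \ref{5.7} are satisfied whenever $\textup{id}_C P_C \geq 1$, and we obtain the exact sequence
$$\xymatrix{0\ar[r]&P_C\otimes B \ar[r]&I^0_B\oplus\bar{I}^0_B \ar[r]&\tilde{I}_B\oplus\bar{I}^1_B\ar[r]&\textup{Ext}^1_C(E,P_C).}$$
The central step is to verify that $\textup{Ext}^1_C(E,P_C) = 0$. Since $P_C$ is a direct summand of $C$, it suffices to show $\textup{Ext}^1_C(E,C) = 0$, which follows from Lemma \ref{4.6}(a) applied with $M = C$, using the isomorphism $C \otimes_C E \cong E$. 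Once this vanishing is in hand, the rightmost map in the sequence is zero, so by exactness the map $I^0_B \oplus \bar{I}^0_B \to \tilde{I}_B \oplus \bar{I}^1_B$ is surjective, yielding a short exact sequence. Its middle and right terms are coinductions of injective $C$-modules, hence injective $B$-modules by Proposition \ref{3.4}, so this is the claimed injective resolution.

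There is one degenerate case to dispatch: $\textup{id}_C P_C = 0$, i.e., $P_C$ is projective-injective over $C$ and so falls outside the hypotheses of Theorem \ref{5.7}. Here Proposition \ref{4.3}(a) gives $P_C \otimes B \cong P_C$, so $P_C \otimes E = 0$ and hence $\bar{I}^0_C = \bar{I}^1_C = 0$; moreover $\Omega^{-1} P_C = 0$ forces $\tilde{I}_C = 0$ and $I^0_C = P_C$, so the proposed sequence collapses to $0 \to P_C \to P_C \to 0$, which is trivially an injective resolution. The only real content of the proof is the Ext vanishing, and given Lemma \ref{4.6}(a) this is essentially immediate, so I do not anticipate any serious obstacle.
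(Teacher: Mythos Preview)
Your proof is correct and follows the same approach as the paper: apply Theorem~\ref{5.7} and then show that $\textup{Ext}^1_C(E,P_C)=0$. The only cosmetic difference is that the paper cites Corollary~\ref{4.7} (rigidity of $E\oplus C$) for this vanishing, whereas you invoke Lemma~\ref{4.6}(a) directly with $M=C$; since Corollary~\ref{4.7} is itself a consequence of Lemma~\ref{4.6}, the two arguments are equivalent. Your treatment of the degenerate case $\textup{id}_C P_C=0$ is a nice addition but is not strictly required, since the phrase ``with the notation of Theorem~\ref{5.7}'' already imports the hypothesis $\textup{id}_C M\ge 1$.
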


\begin{proof}
 By the Theorem \ref{5.7} we obtain an injective presentation of $ P_C\otimes B$ in mod$\,B$.  However, by Corollary \ref{4.7} this presentation becomes an injective resolution since $\text{Ext}_C^1(E,P_C)=0$.
\end{proof}

We obtain therefore a new proof of the following result which was first proved by Keller and Reiten using cluster categories \cite{KR}.
\begin{cor}
If $C$ is a tilted algebra and $B$ is the corresponding cluster-tilted algebra, then $B$ is 1-Gorenstein. 
\end{cor}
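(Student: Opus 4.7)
The plan is to deduce the 1-Gorenstein property directly from Corollary \ref{cor proj} and its dual, since 1-Gorenstein means precisely that $\textup{id}\, B_B \leq 1$ and $\textup{id}\,{}_BB \leq 1$, equivalently $\textup{id}\, B_B \leq 1$ and $\textup{pd}\, DB_B \leq 1$.

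First I would handle the right-module injective dimension. Decomposing $1_B = \sum e_i$ into primitive orthogonal idempotents, we have $B_B = \bigoplus_i e_iB$, and by Proposition \ref{3.4}(a) each indecomposable projective right $B$-module $e_iB$ is isomorphic to the induced module $(e_iC)\otimes_C B$, where $e_iC$ is the corresponding indecomposable projective $C$-module. Applying Corollary \ref{cor proj} with $P_C = e_iC$ yields an injective resolution of length at most one in $\textup{mod}\,B$:
$$\xymatrix{0\ar[r] & e_iB \ar[r] & I^0_B\oplus \bar{I}^0_B \ar[r] & \tilde{I}_B\oplus \bar{I}^1_B \ar[r] & 0.}$$
Taking the direct sum over $i$ gives $\textup{id}\, B_B \leq 1$.

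Next I would invoke the dual version of Corollary \ref{cor proj}, which is asserted in the Remark following Theorem \ref{5.7}: for every injective $C$-module $I_C$ the coinduced module $D(B\otimes_C DI_C)$ admits a projective resolution of length at most one in $\textup{mod}\,B$. Since by Proposition \ref{3.4}(b) every indecomposable injective right $B$-module arises in this way from an indecomposable injective $C$-module, we obtain $\textup{pd}\, DB_B \leq 1$, which by the standard duality $D$ is equivalent to $\textup{id}\, {}_BB \leq 1$.

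Combining both inequalities gives $\max\{\textup{id}\, B_B,\ \textup{id}\, {}_BB\} \leq 1$, hence $B$ is 1-Gorenstein. The main conceptual step is the reduction to the induced/coinduced projective and injective $B$-modules, which was already carried out in Proposition \ref{3.4}; once this observation is in place, Corollary \ref{cor proj} and its dual do all the work, so there is no substantive obstacle remaining.
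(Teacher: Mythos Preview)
Your proof is correct and follows essentially the same route as the paper's: both use Proposition~\ref{3.4} to identify indecomposable projective (resp.\ injective) $B$-modules as induced (resp.\ coinduced) modules, then apply Corollary~\ref{cor proj} and its dual to bound the relevant homological dimensions by one. The only difference is that you spell out the decomposition into indecomposables and the equivalence $\textup{pd}\,DB_B\le 1 \Leftrightarrow \textup{id}\,{}_BB\le 1$ a bit more explicitly than the paper does.
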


\begin{proof}  Suppose $P_B$ is a projective $B$-module.  Proposition \ref{3.4} implies that for any tilted algebra $C$ such that $B=C\ltimes E$ there exists $P_C$, a projective $C$-module, such that $P_B = P_C \otimes B$. 
Corollary \ref{cor proj} implies that the injective dimension of $P_B$ in mod$\,B$ is at most one.  Dually, one can also show that the projective dimension of injective $B$-modules is at most one.  This implies that $B$ is 1-Gorenstein.  
\end{proof}

\medskip
The only modules $M$ for which Theorem \ref{5.7} does not apply are the injective $C$-modules and the $C$-modules of projective dimension 2. We have a similar result for these modules, but we need to consider the indecomposable summands separately as follows.

\begin{thm}\label{main thm}
Let $C$ be a tilted algebra, $B$ the corresponding cluster-tilted algebra, $M$ an \emph{indecomposable} $C$-module. Let 
$$\xymatrix {0\ar[r]& M \ar[r]&I^0_C \ar[r]&I^1_C&\text{and}& 0\ar[r]& M\otimes E \ar[r]&\bar{I}^0_C \ar[r]&\bar{I}^1_C}$$
be minimal injective presentations in {\upshape{mod}}$\, C$, 
$$\xymatrix{0\ar[r] & %D(E\otimes DM)
{\tau\Omega M}\ar[r] & \hat{I}_C}$$
an injective envelope in \upshape{mod}$\,C$, and let $\tilde{I}_C$ be the injective $C$-module $\tilde{I}_C=\nu\nu^{-1}\Omega^{-1} M$.  Then 
$$\xymatrix {0\ar[r]&M\otimes B \ar[r]&I^0_B\oplus\bar{I}^0_B \ar[r]&\tilde{I}_B\oplus\bar{I}^1_B\oplus \hat{I}_B}$$
is an injective presentation of $M\otimes B$ in {\upshape{mod}}$\,B$. 
\end{thm}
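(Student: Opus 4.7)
I would invoke the classical splitting property of the module category of a tilted algebra $C$: every indecomposable $M\in\text{mod}\,C$ satisfies either $\text{pd}_C M\leq 1$ or $\text{id}_C M\leq 1$, and split the argument accordingly.

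\emph{Case 1: $\text{pd}_C M\leq 1$.} Since $\Omega M$ is then projective, $\tau\Omega M=0$ and $\hat{I}_C=0$, so the asserted sequence collapses to the one appearing in Theorem \ref{5.7}. If $\text{id}_C M \geq 1$, Theorem \ref{5.7} applies directly. If $\text{id}_C M = 0$, i.e.\ $M$ is injective, then $I^1_C$, $\tilde{I}_C$, $M\otimes_C E$, $\bar{I}^0_C$, $\bar{I}^1_C$ all vanish, while Propositions \ref{4.3} and \ref{3.6}(b) combined with $\tau\Omega M=0$ give $M\otimes_C B\cong M\cong I^0_B$; the target thus degenerates to $0\to M\to M\to 0$, which is trivially a presentation.

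\emph{Case 2: $\text{pd}_C M=2$.} This is the main case. The splitting property forces $\text{id}_C M\leq 1$, and Proposition \ref{4.3}(a) then yields $M\otimes_C B\cong M$ together with $M\otimes_C E = 0$, so $\bar{I}^0_C = \bar{I}^1_C = 0$. The target reduces to an exact sequence $0\to M\to I^0_B\to \tilde{I}_B\oplus \hat{I}_B$, with $\tilde{I}_C = I^1_C$ when $\text{id}_C M = 1$ and $\tilde{I}_C = 0$ when $\text{id}_C M=0$. To build it, I would apply the coinduction functor $\text{Hom}_C(B,-)$ to the minimal injective resolution $0\to M\to I^0_C\to \Omega^{-1}M \to 0$ (of length $\leq 1$). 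Because $I^0_C$ is $C$-injective, $\text{Ext}^1_C(B,I^0_C)=0$, giving an exact sequence of $B$-modules
$$0\to \text{Hom}_C(B,M) \to I^0_B \xrightarrow{\ \phi\ } \text{Hom}_C(B,\Omega^{-1}M),$$
whose rightmost term is $\tilde{I}_B$ when $\text{id}_C M = 1$ and zero otherwise. Proposition \ref{3.6}(b) also provides $0\to M\to \text{Hom}_C(B,M)\to \tau\Omega M\to 0$. Composing the projection onto $\tau\Omega M$ with the injective envelope $\tau\Omega M \hookrightarrow \hat{I}_C$ and the canonical inclusion $\hat{I}_C\hookrightarrow \hat{I}_B$ (Proposition \ref{3.6}(b) applied to $\hat{I}_C$) yields a morphism $\text{Hom}_C(B,M) \to \hat{I}_B$ of kernel exactly $M$, which extends to some $\psi\colon I^0_B\to \hat{I}_B$ by injectivity of $\hat{I}_B$ in $\text{mod}\,B$.

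A short diagram chase would then confirm that $(\phi,\psi)\colon I^0_B\to \tilde{I}_B\oplus \hat{I}_B$ has kernel $M$: indeed $\ker\phi = \text{Hom}_C(B,M)$, and on this subobject $\psi$ restricts to the explicit kernel-$M$ map built above. The main obstacle is conceptual rather than computational: one must recognize that the extra summand $\hat{I}_B$, absent in Theorem \ref{5.7}, must be introduced precisely to absorb the cokernel $\tau\Omega M$ of the standard inclusion $M\hookrightarrow \text{Hom}_C(B,M)$, which is nonzero exactly when $\text{pd}_C M = 2$. Once $\psi$ is chosen, the kernel computation is automatic because any $x\in I^0_B$ with $\phi(x)=0$ lies in $\text{Hom}_C(B,M)$, where $\psi$ agrees with the map of kernel $M$, so the extension cannot enlarge the combined kernel beyond $M$.
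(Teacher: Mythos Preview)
Your proposal is correct and follows essentially the same route as the paper. Both arguments split on the tilted-algebra dichotomy $\textup{pd}_C M\le 1$ versus $\textup{id}_C M\le 1$, reduce the first case to Theorem~\ref{5.7}, and in the second case apply coinduction to the short injective resolution of $M$; the only cosmetic difference is that the paper packages the construction of your map $\psi$ via an explicit Snake Lemma diagram and the Horseshoe Lemma applied to the resulting short exact sequence $0\to D(E\otimes DM)\to\Omega^{-1}_B(M)\to\Omega^{-1}_B(D(B\otimes DM))\to 0$, whereas you build $\psi$ directly by extending along the inclusion $\textup{Hom}_C(B,M)\hookrightarrow I^0_B$ using injectivity of $\hat{I}_B$ --- which is exactly what the Horseshoe step amounts to.
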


\begin{proof}
We begin by defining the maps in the injective presentation of $M\otimes B$, so let 

$$\xymatrix@C=2cm{0\ar[r]&M\otimes B \ar[r]^-{\bigl(\begin{smallmatrix} j_0i_0u_1\\ \gamma_0\theta' g_2\end{smallmatrix}\bigr)}&I^0_B\oplus\bar{I}^0_B \ar[r]^-{\Bigl( \begin{smallmatrix} \alpha& -l_1\gamma v_1\\ 0& \pi\\ \zeta& 0 \end{smallmatrix}\Bigr)}&\tilde{I}_B\oplus\bar{I}^1_B\oplus \hat{I}_B}$$
where all the maps except $\zeta$ are the same as in the sequence (\ref{eq 7.5}).\footnote{Note that in (\ref{eq 7.5}) we referred to Lemma \ref{5.1} to define the maps $i_0, j_0, \alpha, l_1$, and, although this lemma assumes $\textup{id}_CM=2$, this condition is not necessary to define these maps.} 
%
%the maps are $v_1, \pi, \gamma_0$ are given in diagram (\ref{6}), the maps $\theta', u_1, g_2$ in diagram (\ref{8}), the maps $i_0, j_0, \alpha, l_1$ in Lemma \ref{5.3}, and $\gamma$ in diagram (\ref{7}).{\footnote{{\color{blue} Can we really say this? Because in the assumptions of Lemma \ref{5.3} we have id$_C M =2$, but the maps that we want here still make sense for an arbitrary module $M$. Maybe, we can put a remark after the lemma saying the maps we need are still well-defined?}}} 
%
Therefore, it remains to define the map $\zeta$, which we do next.  

Observe that if $%D(E\otimes DM)
{\tau\Omega M}=0$ then $\hat{I}_B=0$ and the map $\zeta=0$.   Now, suppose 
{$\tau\Omega M$ is nonzero. 
Recall that  $\tau\Omega M=D(E\otimes DM)$ by Proposition~\ref{4.1}(d)}, and hence Proposition~\ref{4.3}(b) implies that pd$_C M$=2.  Since, $M$ is indecomposable and the algebra $C$ is tilted, it follows that id$_C M \leq 1$.   Therefore, the injective presentation of $M$ in mod$\,C$ given in the statement of the theorem is actually an injective resolution of $M$, see the top row in diagram~(\ref{d10}) below.  
{The bottom row of that diagram is obtained by applying
the coinduction functor to this resolution. We}
 obtain the following commutative diagram where $i^*_0$ and $i_1^*$ are coinduced morphism from $i_0$ and $i_1$ respectively, and the maps $\pi^*, j_0$ are the inclusions as in Proposition~\ref{3.6}(b). 

\begin{equation}\label{d10}
\xymatrix@C=2cm{0\ar[r]&M\ar[r]^-{i_0}\ar[d]^-{\pi^*}&I^0_C\ar[d]^-{j_0}\ar[r]^-{i_1}&I^1_C\ar[d]\ar[r]&0\\
0\ar[r]&D(B\otimes DM)\ar[r]^-{i^*_0}&I^0_B\ar[r]^-{i^*_1}&I^1_B}
\end{equation}

Now we construct a commutative diagram~(\ref{d9}) with exact rows and columns.  By diagram~(\ref{d10}) the first square in diagram~(\ref{d9}) commutes.  Observe, that the vertical sequence on the left in diagram~(\ref{d9}) is a short exact sequence by Proposition~\ref{3.6}(b).  Since $M\xrightarrow{i_0} I^0_C$ is an injective envelope of $M$ in mod$\,C$, the corresponding coinduced map $i^*_0$ is an injective envelope of $D(B\otimes DM)$ in mod$\,B$.  Therefore, the cokernel of $i^*_0$ is $\Omega^{-1}_B(D(B\otimes DM))$. This shows that the bottom sequence in diagram~(\ref{d9}) is indeed a short exact sequence.  Also, since soc$\,I^0_C$ = soc$\,I^0_B$, it follows that $j_0 i_0$ is an injective envelope of $M$ in mod$\,B$, so the cokernel of $j_0 i_0$ is $\Omega^{-1}_B(M)$.  This shows that the top sequence in diagram~(\ref{d9}) is a short exact sequence.  Because $h^*_0j_0i_0=0$, it follows by the universal property of cok$\,j_0i_0$ that there exists a map $h$ as in the diagram, such that the second square commutes. 
{In particular,} $h$ must be surjective, and by the Snake Lemma it follows that ker$\,h\cong D(E\otimes DM)$.  This shows that the vertical sequence on the right in diagram~(\ref{d9}) is also exact.  Therefore, this diagram is commutative with exact rows and columns.  

\begin{equation}\label{d9}
\xymatrix{&&&0\ar[d]\\
&0\ar[d]&&D(E\otimes DM)\ar[d]^-{i^*}\\
0\ar[r]&M\ar[r]^-{j_0i_0}\ar[d]^-{\pi^{*}}&I^0_B\ar[r]^-{h_0}\ar@{=}[d]&\Omega^{-1}_B(M)\ar[r]\ar[d]^-{h}&0\\
0\ar[r]&D(B\otimes DM)\ar[d] \ar[r]^-{i^{*}_0} & I^0_B \ar[r]^-{h^*_0} & \Omega^{-1}_B(D(B\otimes DM))\ar[d]\ar[r]&0\\
&D(E\otimes DM)\ar[d]&&0\\
&0}
\end{equation}

Next we construct an injective  envelope of $\Omega^{-1}_B(M)$. 
{To do so we will apply the Horseshoe Lemma to the short exact sequence in the right column of diagram~(\ref{d9}).}
 Since $D(E\otimes DM)\xrightarrow{f_0}\hat{I}_C$ is an injective {envelope} %of $D(E\otimes DM)$ 
 in mod$\,C$, it follows that the corresponding coinduced map $D(E\otimes DM)\xrightarrow{f^*_0}\hat{I}_B$ is an injective {envelope} 
 %of $D(E\otimes DM)$ 
 in mod$\,B$.  Also, 
 {since
  Im$\,h^*_0 = \text{Im}\,i_1^*$, we see that there is  an injective envelope $\Omega^{-1}(D(B\otimes DM))\xrightarrow{q_0}I^1_B$, %  of $\Omega^{-1}(D(B\otimes DM))$
   where 
\begin{equation}\label{eq 10.5}q_0 h^*_0 = i_1^*.\end{equation} 
   %The modules $D(E\otimes DM)$, $\Omega^{-1}_B(M)$, and $\Omega^{-1}(D(B\otimes DM))$ form a short exact sequence, so 
   Thus the Horseshoe Lemma implies that there is an exact sequence}

\begin{equation}\label{d9.5}\xymatrix{0\ar[r]&\Omega^{-1}_B(M)\ar[r]^-{\bigl(\begin{smallmatrix} q_0h \\ \zeta_0 \end{smallmatrix}\bigr)}&I^1_B\oplus \hat{I}_B},\end{equation}
 where $\zeta_0 i^* = f^*_0$ and $\zeta_0$ exists because $i^*$ is a monomorphism and $\hat{I}_B$ is an injective $B$-module.  Finally, we define the map $\zeta = \zeta_0 h_0$.  Hence, we defined all morphisms that appear in the injective presentation constructed in this theorem, and now we want to show that this sequence is exact.

%If $M=M_1\oplus M_2$, then an injective presentation of $M$ can be obtained by taking a direct sum of the injective presentations of its summands $M_1$ and $M_2$.  Therefore, it suffices to show that the theorem holds when $M$ is indecomposable.  We assume that $M$ is indecomposable and consider two cases below. 
\medskip
If id$_C M\geq 1$ and pd$_C M \leq 1$, then by Proposition~\ref{4.3}(b) we see that $\tau\Omega M = D(E\otimes DM)=0$.   This shows that $\hat{I}_C=\hat{I}_B=0$, and the injective presentation in the theorem coincides with the injective presentation in Theorem~\ref{5.7}.  Hence in this case we are done. 

If id$_C M = 0$ or pd$_C M =2$, then id$_C M \leq 1$, because $M$ is indecomposable and $C$ is a tilted algebra.  Proposition~\ref{4.3}(a) implies that $M\otimes B \cong  M$, so in particular $M\otimes E = 0$.  Therefore, $\bar{I}^i_C=\bar{I}^i_B=0$ for $i=0,1$.  Also, since id$_C M \leq 1$ we see that $\Omega^{-1}M = I^1_C$, and we have $\tilde{I}_C = \nu\nu^{-1}\Omega^{-1}M = \nu\nu^{-1} I^1_C = I^1_C$.  Hence, to prove the theorem it suffices to show that 

\begin{equation}\label{s14}
\xymatrix@C=2cm {0\ar[r]&M \ar[r]^-{j_0i_0}&I^0_B \ar[r]^-{\bigl( \begin{smallmatrix} \alpha \\ \zeta \end{smallmatrix}\bigr)}&{I}^1_B\oplus \hat{I}_B}
\end{equation}
is an injective presentation in mod$\,B$, where the maps $j_0, i_0, \alpha$ are given in Lemma \ref{5.3}, and the map $\zeta$ was defined earlier in the proof.  Observe that since id$_C M \leq 1$, we have $\alpha = i_1^*$.  

Because of diagram (\ref{d9}) and the exact sequence (\ref{d9.5}) it suffices to show that
 $\bigl( \begin{smallmatrix} i_1^* \\ \zeta \end{smallmatrix}\bigr) = \bigl( \begin{smallmatrix} q_0 h \\ \zeta_0 \end{smallmatrix}\bigr) h_0$. By definition we have that $\zeta=\zeta_0 h_0$.
 On the other hand, 
$q_0 h h_0 =  q_0 h_0^* =i_1^*$, where the first equality holds  by diagram~(\ref{d9}) and the second by equation (\ref{eq 10.5}).  This shows the claim, and completes the proof of the theorem.   
\end{proof}

{The injective presentation of $M\otimes B$ described in Theorem \ref{main thm} can be simplified depending on the homological dimensions of $M$.  Recall, that if id$_C M = 1$ and pd$_C M \leq 1$ then an injective presentation of $M\otimes B = M$ is given in the remark following Theorem~\ref{5.7}.  However, if id$_C M = 2$ and pd$_C M \leq 1$, then the construction in Theorem~\ref{5.7} is needed to produce an injective presentation.  Finally, if id$_C M = 1$ and pd$_C M =2$ then $M\otimes B = M$ and its injective presentation is given in (\ref{s14}).  We examine the remaining cases next.  }

\begin{cor}\label{cor 11}
With the notation of Theorem~\ref{main thm}, if $M=I_C$ is an injective $C$-module, then 
$$\xymatrix{0\ar[r]& I_C\otimes B = I_C \ar[r]& I_B \ar[r] & \hat{I}_B}$$
is an injective presentation in \textup{mod}$\,B$.   
\end{cor}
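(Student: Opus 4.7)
The plan is to deduce Corollary \ref{cor 11} by specializing Theorem \ref{main thm} to the case $M = I_C$ and observing that the injectivity of $M$ makes three of the four summands on the right-hand side of the presentation collapse to zero.

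First I would record that since $I_C$ is injective, $\textup{id}_C I_C = 0$, so the minimal injective presentation of $I_C$ in mod$\,C$ is simply $0 \to I_C \to I_C \to 0$. This gives $I^0_C = I_C$ with $I^1_C = 0$, and therefore $I^0_B = I_B$.

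Next I would argue that the auxiliary summands $\bar{I}^0_B$, $\bar{I}^1_B$, and $\tilde{I}_B$ all vanish. Applying Proposition \ref{4.3}(a), the condition $\textup{id}_C I_C \leq 1$ gives $I_C \otimes B \cong I_C$. Combining this with the $C$-module decomposition $M \otimes B \cong M \oplus (M \otimes E)$ noted in Section \ref{sect 3} forces $I_C \otimes E = 0$, and so the minimal injective presentation of $I_C \otimes E$ is trivial; that is, $\bar{I}^0_B = \bar{I}^1_B = 0$. Similarly, $\Omega^{-1} I_C = 0$ because $I_C$ is injective, hence $\tilde{I}_C = \nu\nu^{-1} \Omega^{-1} I_C = 0$ and $\tilde{I}_B = 0$. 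Substituting these into the presentation of Theorem \ref{main thm} collapses it to $0 \to I_C \to I_B \to \hat{I}_B$, which is precisely the claimed sequence.

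I do not anticipate a serious obstacle: the heavy lifting has already been done in Theorem \ref{main thm}, and the corollary is essentially a bookkeeping reduction. The one point that deserves some care is confirming that the map $I_B \to \hat{I}_B$ in the collapsed sequence really is the one induced (via coinduction) by the injective envelope $\tau\Omega I_C \to \hat{I}_C$. This can be read off from the definition $\zeta = \zeta_0 h_0$ in the proof of Theorem \ref{main thm}, using diagrams (\ref{d9}) and (\ref{d9.5}): after the other summands disappear, the only surviving component of the differential $I^0_B \oplus \bar{I}^0_B \to \tilde{I}_B \oplus \bar{I}^1_B \oplus \hat{I}_B$ is $\zeta\colon I_B \to \hat{I}_B$.
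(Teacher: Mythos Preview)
Your proposal is correct and follows essentially the same route as the paper: specialize Theorem~\ref{main thm} to $M=I_C$, use injectivity to get $I^0_C=I_C$, $I^1_C=0$, $\Omega^{-1}I_C=0$ (hence $\tilde I_B=0$), and use Proposition~\ref{4.3}(a) to get $I_C\otimes E=0$ (hence $\bar I^0_B=\bar I^1_B=0$). Your added remark identifying the surviving differential with $\zeta$ is a helpful clarification but not needed for the argument.
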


\begin{proof}
Consider the injective presentation in Theorem~\ref{main thm} with $M=I_C$.  Since, $I_C$ is injective it follows that $I^0_C = I_C$ and $I^1_C = 0$.  Therefore, $I^0_B = I_B$ and $\tilde{I}_C = \tilde{I}_B = 0$.  On the other hand, Proposition~\ref{4.3}(a) implies that $M\otimes E =0$, so $\bar{I}^i_C = \bar{I}^i_B = 0$ for $i=0,1$.  This shows that when $M=I_C$ then the injective presentation in Theorem~\ref{main thm} coincides with the exact sequence given in this corollary.  
\end{proof}

\begin{remark}
If $M = I_C$ is an injective $C$-module such that pd$_C I_C \leq 1$, then the injective presentation in Corollary~\ref{cor 11} becomes an injective resolution with $\hat{I}_B=0$.  Indeed, Proposition~\ref{4.1}(d) implies that $\tau \Omega I_C = D(E\otimes D I_C)$, which is zero by Proposition~\ref{4.3}(b).  Therefore, its injective cover $\hat{I}_C = \hat{I}_B = 0$.  In particular, observe that in this case $I_C = I_B$ is an injective $B$-module.   
\end{remark}

\section{Examples}\label{sect ex}
In this section, we provide examples of injective resolutions of induced modules that are constructed by applying Theorem~\ref{main thm} and its corollaries.  In certain cases, we can also obtain infinite periodic injective resolutions of such modules, see Example~\ref{ex 3}.

\begin{exmp}
Let $C$ be the tilted algebra  of type $\mathbb{D}_5$ given by the following quiver with relations.
$$\xymatrix@R=.5cm{1\ar[dr]&&&\\&3\ar[r]^{\alpha}\ar[dl]&4\ar[r]^{\beta}&5&&\alpha\beta=0.\\2&&&}$$
The corresponding cluster-tilted algebra $B$ is given by the quiver with relations below. 
$$\xymatrix@R=.5cm{1\ar[dr]&&&\\&3\ar[r]^{\alpha}\ar[dl]&4\ar[r]^{\beta}&5\ar@/^1pc/[ll]^{\delta}&&\alpha\beta=\beta\delta=\delta\alpha=0.\\2&&&}$$
We want to construct an injective presentation of $M =\begin{smallmatrix}3\\2 \end{smallmatrix}$ in mod$\,B$.  Observe that $M$ is also an indecomposable $C$-module of projective dimension 2, so its injective presentation is given in (\ref{s14}).  The injective resolution of $M$ is mod$\,C$ is given below. 
$$\xymatrix{0\ar[r]&M \ar[r]&I_C(2)\ar[r]&I_C(1)\ar[r]&0}$$
Next, we see that $\tau_C \Omega_C M = 5$, hence $\hat{I}_C = {I}_C(5)$ is the injective envelope of $5$.  Therefore, we obtain the following injective presentation of $M = M\otimes _C B$ in mod$\,B$.  
$$\xymatrix{0\ar[r]&M \ar[r]&I_B(2)\ar[r]&I_B(1)\oplus I_B(5)}$$
Note that here $I_B(2)\not = I_C(2)$.
\end{exmp}

\begin{exmp}
Let $C$ be the tilted algebra given by the following quiver with relations. 
$${\xymatrix@R=.5cm {&5&\\&2\ar[u]&&&\delta\alpha=0\\1\ar[ur]^{\alpha}\ar[dr]_{\beta}&&4 \ar@<-.5ex>[ll]_{\delta} \ar@<.5ex>[ll]^{\gamma}&&\gamma\beta = 0.\\&3&}}$$
The corresponding cluster-tilted algebra $B$ is of type $\tilde{\mathbb{A}}_{(3,2)}$ and it is given by the quiver with relations below. 
$${\xymatrix@R=.5cm {&5&\\&2\ar[dr]^{\epsilon}\ar[u]&&&\delta\alpha=\alpha\epsilon=\epsilon\delta=0\\1\ar[ur]^{\alpha}\ar[dr]_{\beta}&&4 \ar@<-.5ex>[ll]_{\delta} \ar@<.5ex>[ll]^{\gamma}&&\gamma\beta = \beta\sigma=\sigma\gamma=0.\\&3\ar[ur]_{\sigma}&}}$$
We want to construct the injective resolution of $P_B(2)=\begin{smallmatrix}2\\5\;4\\\;\;\;1\\\;\;\;2\\\;\;\;5\end{smallmatrix}$, the projective $B$-module at vertex 2, which is given in Corollary \ref{cor proj}.  First, we find minimal injective presentations of $P_C(2)=\begin{smallmatrix}2\\5\end{smallmatrix}$ and $P_C(2)\otimes E = \begin{smallmatrix}4\\1\\2\\5\end{smallmatrix}$ in mod$\,C$, which are given below.  
$$\xymatrix{0\ar[r]&P_C(2)\ar[r]&I_C(5)\ar[r]&I_C (1)&&0\ar[r]&P_C(2)\otimes E\ar[r]&I_C(5)\ar[r]&0}$$
We write out the explicit representations involved in the sequences above. 
$$\xymatrix @C=1.5cm {0\ar[r]&{\begin{smallmatrix}2\\5\end{smallmatrix}}\ar[r]&{\begin{smallmatrix}4\\1\\2\\5\end{smallmatrix}}\ar[r]&{\begin{smallmatrix}4\;4\\1\end{smallmatrix}}&
0\ar[r]&{\begin{smallmatrix}4\\1\\2\\5\end{smallmatrix}}\ar[r]&{\begin{smallmatrix}4\\1\\2\\5\end{smallmatrix}}\ar[r]&0}$$
Here $I_C(i)$ denotes the injective $C$-module at vertex $i$, while $I_B(i)$ will denote the corresponding injective $B$-module.  Next, we calculate $\nu\nu^{-1}\Omega ^{-1} P_C(2)$.  Observe that $\Omega^{-1} P_C (2)$ is the module with dimension vector $(1,0,0,1,0)$ such that $\gamma = 1$ and $\delta=0$.  Then $\nu^{-1}\Omega^{-1}P_C(2)=\text{Hom}_C(DC, \Omega^{-1}P_C(2))$, and the only injectives that have a nonzero morphism into $\Omega^{-1} P_C(2)$ are $I_C(2)$ and $I_C(5)$.  Hence, $\nu^{-1}\Omega^{-1} P_C(2)=P_C(2)$, so $\nu\nu^{-1}\Omega^{-1}P_C(2)=I_C(2)$. Then according to Theorem \ref{5.7} we construct the injective resolution of $P_B(2)$ 
$$\xymatrix{0\ar[r]&P_B(2)\ar[r]&I_B(5)\oplus I_B(5)\ar[r]&I_B(2)\oplus 0\ar[r]&0}$$
or equivalently substituting the representations we have 
$$\xymatrix@C=1.5cm{0\ar[r]&{\begin{smallmatrix}2\\5\;4\\\;\;\;1\\\;\;\;2\\\;\;\;5\end{smallmatrix}} \ar[r]&{\begin{smallmatrix}2\\4\\1\\2\\5\end{smallmatrix}}\oplus {\begin{smallmatrix}2\\4\\1\\2\\5\end{smallmatrix}}\ar[r]&{\begin{smallmatrix}2\\4\\1\\2\end{smallmatrix}}\ar[r]&0.}$$
\end{exmp}

\begin{exmp}\label{ex 3}
 
Consider the following three tilted algebras $C_1,C_2,C_3$ of type $\mathbb{D}_4$.

$$\begin{array}{cccccc}
C_1\xymatrix{&2\ar[rd]^\beta\\ 
1\ar[ru]^\alpha\ar[rd]_\gamma &&4\ar@{.}[ll]\\
&3\ar[ru]_\delta 
} \qquad
&C_2
\xymatrix{&2\ar@{.}[rd]\\ 
1\ar[ru]^\alpha\ar[rd]_\gamma &&4\ar[ll]_\epsilon\\
&3\ar@{.}[ru]
} \qquad
&C_3\xymatrix{&2\ar[rd]^\beta\\ 
1\ar@{.}[ru]\ar@{.}[rd] &&4\ar[ll]_\epsilon\\
&3\ar[ru]_\delta 
} \\

\\
\alpha\beta+\gamma\delta=0, &\,\epsilon\alpha=0, \, \epsilon\gamma=0,&\,\beta\epsilon=0,\,\delta\epsilon=0, \end{array}$$
The relation extension of each of these tilted algebras is the  following cluster-tilted algebra $B$.
$$\begin{array}{cc}
 \xymatrix{&2\ar[rd]^\beta\\ 
1\ar[ru]^\alpha\ar[rd]_\gamma &&4\ar[ll]_\epsilon\\
&3\ar[ru]_\delta 
&&
\alpha\beta+\gamma\delta =0 ,\,\epsilon\alpha=0, \, \epsilon\gamma=0,\,\beta\epsilon=0,\,\delta\epsilon=0} \end{array}$$
We will compute an injective resolution
for the simple $B$-module $1$.

The $B$-module
$1$ is induced from $C_2$. In $\textup{mod\,}C_2$ we have the injective resolution
\[\xymatrix{0\ar[r]& 1\ar[r] & I_{C_2}(1) \ar[r]&I_{C_2}(4) \ar[r]&0}\]
and since $\textup{id}_{C_2}\, 1 =1$, 
%$\tau^{-1}\Omega^{-1} \,1=0$, $\tau\Omega\, 1=0$, $\nu\nu^{-1}\Omega^{-1} \,1 =I_{C_2}(4)$. From Theorem~\ref{main thm}
Remark~\ref{5.7} yields the following exact sequence in $\textup{mod}\,B$
\[\xymatrix{0\ar[r]& 1\ar[r] & I_{B}(1) \ar[r]&I_{B}(4) \ar[r]&{\begin{smallmatrix} 1\\2 \ 3 \end{smallmatrix}} \ar[r]&0 ,}\]
%where we have added the second cosyzygy to the right.

Now the module $M={ \begin{smallmatrix} 1\\2 \ 3 \end{smallmatrix}}$ is not induced from $C_2$ or $C_3$ but it is induced from $C_1$.
In $\textup{mod\,}C_1$ we have the injective resolution
\[\xymatrix{0\ar[r]& {\begin{smallmatrix}1\\2 \ 3 \end{smallmatrix}}\ar[r] & I_{C_1}(2)\oplus I_{C_1}(3)  \ar[r]&I_{C_1}(1) \ar[r]&0}\]
and since $\textup{id}_{C_2}\, { \begin{smallmatrix} 1\\2 \ 3 \end{smallmatrix}}=1$, %  $\tau^{-1}\Omega^{-1} \,M=0$, $\tau\Omega\, M=0$ and $\nu\nu^{-1}\Omega^{-1} \,M =I_{C_1}(1) $. From Theorem~\ref{main thm},
we obtain the following exact sequence in $\textup{mod}\,B$
\[\xymatrix{0\ar[r]&{\begin{smallmatrix}  1\\2 \ 3 \end{smallmatrix}}\ar[r] & I_{B}(2)\oplus I_{B}(3) \ar[r]&I_{B}(1) \ar[r]&{\begin{array}{c} 4 \end{array}} \ar[r]&0 ,}\]
and again we have added the second cosyzygy to the right.

Now in $\textup{mod}\,C_2$, the module 4 is  injective, $\tau\Omega \,4 ={ \begin{smallmatrix}  1\\2 \ 3 \end{smallmatrix}}$ and $\hat{I} =I_{C_2}(2)\oplus I_{C_2}(3)$. Thus Corollary~\ref{cor 11} yields the following exact sequence in $\textup{mod}\,B$
\[\xymatrix{0\ar[r]& 4\ar[r] & I_{B}(4) \ar[r]& I_{B}(2)\oplus I_{B}(3)\ar[r]&1 \ar[r]&0,}\]
and our new cosyzygy is again the simple module 1. Thus we have computed a periodic injective resolution of $1$.

\bigskip 
This procedure will produce injective resolutions as long as in each step the new second cosyzygy is again induced from some tilted algebra. In particular, for representation finite cluster-tilted algebras, it will produce an injective resolution  for every module, because in this case every module is induced \cite{SS}.
\end{exmp}

\bibliographystyle{plain}

\end{document}